	\newcommand{\blind}{0}
    \renewcommand\section{\@startsection {section}{1}{\z@}%
                                       {-3.5ex \@plus -1ex \@minus -.2ex}%
                                       {2.3ex \@plus.2ex}%
                                       {\normalfont\fontfamily{phv}\fontsize{16}{19}\bfseries}}
    \renewcommand\subsection{\@startsection{subsection}{2}{\z@}%
                                         {-3.25ex\@plus -1ex \@minus -.2ex}%
                                         {1.5ex \@plus .2ex}%
                                         {\normalfont\fontfamily{phv}\fontsize{14}{17}\bfseries}}
    \renewcommand\subsubsection{\@startsection{subsubsection}{3}{\z@}%
                                        {-3.25ex\@plus -1ex \@minus -.2ex}%
                                         {1.5ex \@plus .2ex}%
                                         {\normalfont\normalsize\fontfamily{phv}\fontsize{14}{17}\selectfont}}
    \newcommand{\TD}{\nabla}
    \newtheorem{theorem}{Theorem}[section]
    \newtheorem{lemma}[theorem]{Lemma}
    \newtheorem{corollary}[theorem]{Corollary}
    \newtheorem{proposition}[theorem]{Proposition}
    \newtheorem{assumption}{Assumption}
    \newtheorem{definition}[theorem]{Definition}
    \newtheorem{example}[theorem]{Example}
    \newtheorem{remark}{Remark}
    \DeclareMathOperator*{\argmin}{\arg\!\min}
    \renewcommand*{~}{\relax\ifmmode\sim\else\nobreakspace{}\fi}
    \newcommand{\Ebar}{\bar{E}}
    \newcommand{\Fbar}{\bar{F}}
    \newcommand{\Nhat}{\hat{N}}
    \newcommand{\Rhat}{\hat{R}}
    \newcommand{\rhohat}{\hat{\rho}}
    \newcommand{\sigmahat}{\hat{\sigma}}
    \newcommand{\Ntilde}{\widetilde{N}}
    \newcommand{\Rtilde}{\widetilde{R}}
    \newcommand{\BFe}{\bm{e}}
    \newcommand{\BFg}{\bm{g}}
    \newcommand{\BFs}{\bm{s}}
    \newcommand{\BFx}{\bm{x}}
    \newcommand{\BFE}{\bm{E}}
    \newcommand{\BFF}{\bm{F}}
    \newcommand{\BFG}{\bm{G}}
    \newcommand{\BFX}{\bm{X}}
    \newcommand{\BFS}{\bm{S}}
    \newcommand{\BFbeta}{\bm{\beta}}
    \newcommand{\BFXhat}{\hat{\BFX}}
    \newcommand{\BFXtilde}{\widetilde{\BFX}}
    \newcommand{\sfH}{\mathsf{H}}
    \newcommand{\sfM}{\mathsf{M}}
    \newcommand{\mcA}{\mathcal{A}}
    \newcommand{\mcB}{\mathcal{B}}
    \newcommand{\mcD}{\mathcal{D}}
    \newcommand{\mcF}{\mathcal{F}}
    \newcommand{\mcH}{\mathcal{H}}
    \newcommand{\mcK}{\mathcal{K}}
    \newcommand{\mcN}{\mathcal{N}}
    \newcommand{\mcO}{\mathcal{O}}
    \newcommand{\mcR}{\mathcal{R}}
    \newcommand{\mcV}{\mathcal{V}}
    \newcommand{\mcX}{\mathcal{X}}
    \newcommand{\mcY}{\mathcal{Y}}
    \newcommand{\mbE}{\mathbb{E}}
    \newcommand{\mbN}{\mathbb{N}}
    \newcommand{\mbP}{\mathbb{P}}
    \newcommand{\mbR}{\mathbb{R}}
\begin{document}
		
		\def\spacingset#1{\renewcommand{\baselinestretch}%
			{#1}\small\normalsize} \spacingset{1}
		
		\if0\blind
		{
			\title{\bf 
   Iteration Complexity and Finite-Time Efficiency of Adaptive Sampling Trust-Region Methods for Stochastic Derivative-Free Optimization}
			\author{Yunsoo Ha $^a$ and Sara Shashaani $^a$ \\
			$^a$ Edward P. Fitts Department of Industrial and System Engineering, \\North Carolina State University, Raleigh, NC 27695, USA}
			\date{}
			\maketitle
		} \fi
		
		\if1\blind
		{

            \title{\bf Iteration Complexity and Finite-Time Efficiency of Adaptive Sampling Trust-Region Methods for Stochastic Derivative-Free Optimization}
			\author{Author information is purposely removed for double-blind review}
			
\bigskip
			\bigskip
			\bigskip
			\begin{center}
				{\LARGE\bf Iteration Complexity and Finite-Time Efficiency of Adaptive Sampling Trust-Region Methods for Stochastic Derivative-Free Optimization}
			\end{center}
			\medskip
		} \fi
		\bigskip
		
	\begin{abstract}
Adaptive sampling with interpolation-based trust regions or ASTRO-DF is a successful algorithm for stochastic derivative-free optimization with an easy-to-understand-and-implement concept that guarantees almost sure convergence to a first-order critical point.  To reduce its dependence on the problem dimension, we present local models with diagonal Hessians constructed on interpolation points based on a coordinate basis. We also leverage the interpolation points in a direct search manner whenever possible to boost ASTRO-DF's performance in a finite time. We prove that the algorithm has a canonical iteration complexity of $\mcO(\epsilon^{-2})$ almost surely, which is the first guarantee of its kind without placing assumptions on the quality of function estimates or model quality or independence between them. Numerical experimentation reveals the computational advantage of ASTRO-DF with coordinate direct search due to saving and better steps in the early iterations of the search.
	\end{abstract}
			
	\noindent%
	{\it Keywords:} simulation optimization; zeroth-order oracle; finite-time performance.

	\spacingset{1.5} 


\section{Introduction}\label{sec:contribution}
We consider unconstrained simulation optimization (SO) problems of the form 
\begin{equation}
    \min_{\bm{x}\in \mbR^d} f(\bm{x}) := \mbE[F(\bm{x},\xi)] = \int_\Xi F(\bm{x},\xi)dP(\xi),\label{eq:problem}
\end{equation}
where $f:\mbR^d \rightarrow \mbR$ is smooth (but nonconvex) and bounded below, and $F:\mbR^d\times\Xi \rightarrow \mbR$ is the stochastic function value  defined on the probability space $(\Xi,\mcF,P)$.  
In particular, consider $f(\BFx)$ that is only observable with noise by a Monte Carlo simulation, which generates the random variable $F(\BFx, \xi)$. Hence, the estimator of $f(\BFx)$ via sample average approximation and the estimated variance of $F(\BFx, \xi)$ with $n$ runs of the simulation can be obtained by
$$\bar{F}(\BFx,n)=n^{-1}\sum_{i=1}^n F(\BFx,\xi_i),\text{ and }\sigmahat_F^2(\BFx,n)=(n-1)^{-1}\sum_{j=1}^n\left(F(\BFx,\xi_i)-\bar{F}(\BFx,n)\right)^2.$$ Furthermore, assume that derivative information is not directly available from the Monte Carlo Simulation. Hence Problem \eqref{eq:problem} becomes a stochastic derivative-free optimization (SDFO) problem. A SDFO algorithm produces, in one run, $\{\BFX_k\}$ -- a sequence of stochastic incumbent solutions for iterations $k\in\mbN$. This sequence can be viewed as a stochastic process defined on a filtered probability space $(\Omega,\mcF_k,\mbP)$, where $\mcF_k$ denotes the $\sigma$-algebra increasing in $k$. The goal is designing an efficient SDFO algorithm to reach an $\epsilon$-stationary point, that is a point in the set $\{\BFx:\ \|\nabla f(\BFx)\|\leq \epsilon\}$.

The number of function evaluations $n$ at a solution $\BFX_k$ generated by the SDFO algorithm must guarantee estimation accuracy of $\mathbb{P}\{|\bar{E}(\BFX_k,n)|>\varepsilon_k\}\le \alpha_k$, with the required accuracy threshold $\varepsilon_k>0$ and exceedance probability $\alpha_k\in(0,1)$, where $\bar{E}(\BFX_k,n):=\bar{F}(\BFX_k,n)-f(\BFX_k)$ denotes the estimation error. 
An insufficient $n$ during the optimization will threaten the convergence of the SDFO algorithm while an unnecessarily large $n$ at every design point leads to exhausting the simulation budget before reaching a reasonably good solution. Therefore, choosing just enough $n$ for each visited solution not only secures strong consistency but also reinforces efficiency in finite-time and asymptotically.

There is emerging evidence \citep{gratton2018complexity,maggiar2018derivative} that stochastic trust-region methods are effective at solving nonconvex SDFO, mainly due to their natural ability to self-tune step sizes and facility for curvature estimation. ASTRO-DF~\citep{shashaani2016astro,Sara2018ASTRO,ha2021improved} is a class of trust-region optimization algorithms suited for SDFO that deals with the challenge of choosing the right sample size using an adaptive sampling strategy. For a visited solution $\BFX_k$, using a random sample size $N(\BFX_k)$ that adapts to $\varepsilon_k$ and $\alpha_k$ at iteration $k$ can lead to convergence to a first-order stationary point almost surely. Given its versatile mechanics, guarantees in theory with little requirements, and promising results from straightforward implementation, ASTRO-DF is a favorable choice to tackle high-dimensional SDFO using new features. Therefore, this paper investigates the establishment of theoretical and finite-time efficiency properties of ASTRO-DF. 

\paragraph{List of Contributions} 
In ASTRO-DF, the algorithm's progress relies on a local model that, due to the derivative-free setting, is constructed on function value estimates of points adjacent to the incumbent solution within a trust region of size $\Delta_k$. The mechanism of trust-region optimization dictates that $\Delta_k$ eventually drops to 0 to drive the algorithm's convergence. Approximately minimizing the local model within the trust region (a.k.a., subproblem) provides a candidate for the next incumbent solution. The decreasing accuracy threshold for each point in iteration $k$ is identified as $\varepsilon_k=\mcO(\Delta_k^2)$ and the exceedance probability $\alpha_k$ drops to 0 in $k$ due to the $\mbP\{N(\BFX_k)\to\infty$ as $k\to\infty\}=1$  assurance. The sample size relates to the step size (controlled by the trust-region radius) with $N(\BFX_k)=\mcO(\Delta_k^{-4})$. Despite tremendous analytical gains, ASTRO-DF has some practical disadvantages:
\begin{enumerate}
    \item $\mcO{(d^2)}$ number of points needed to build a quadratic model to capture the local curvature information is too costly, especially considering that an increasing number of oracle runs is required in each of these points. Furthermore, the random choice of the design points causes significant inefficiency: to obtain a good model quality, the design points need to be ``well-poised" (or well spread) in the trust-region with the linear algebra cost of $\mcO(d^6)$ for a quadratic model; see Chapter 6 in \cite{katya:DFObook}.
    \item Even though ASTRO-DF converges to a stationary point almost surely, the probability of having a successful iteration, which is the probability of finding a better solution at each iteration, can start off very small, even though it tends to 1 as $k\to\infty$. Rejecting the candidate solution suggested by the model frequently reduces the trust-region radius $\Delta_k$ too quickly, which in turn can lead to running out of simulation budget before finding very good solutions. In other words, full reliance on the local model minimizer can slow the algorithm's progress. 
\end{enumerate}

To address these challenges, we propose two refinements to ASTRO-DF. The first refinement is to use a fixed geometry for the design set based on the coordinate basis and constructing a model with a diagonal Hessian~\citep{coope2020gradient}. Such a fixed geometry was  recently shown to be an optimal design \citep{tom2023optimalpoised}. Selecting the points with this design, besides eliminating a source of randomness inside the algorithm, also drops the linear algebra cost of certifying that the design is well-poised and the row operations needed to solve the system of equations in the interpolation to construct the model. More importantly, if we refer to the stochastic error as $E$, the upper bound for the model gradient error will be reduced to $\mcO(\Delta_k^2 + E/\Delta_k)$ that is a central-difference like error bound, from $\mcO(\Delta_k + E/\Delta_k)$ that is a forward-difference like error bound in the original algorithm. 

The second refinement is to leverage the design points in a direct search manner. This refinement is motivated by observing that in each iteration of ASTRO-DF, the estimated function value at the candidate solution recommended by the local model can be worse than that of the points used to construct the model. Despite such possibilities, the original setup deems the iteration unsuccessful, ignoring the readily identified points that obtain improvement, and starts over to build a model in a smaller neighborhood. In addition to smaller steps, this rejection will significantly increase oracle runs, incurring a considerable waste of simulation budget. A direct search strategy considers selecting design points whose estimated function values provide sufficient reduction in the function value estimates as the next incumbent, enabling progress without extra cost and maintaining stability in step size and sample size in the early iterations. 

\begin{figure} [htp]
\centering
\includegraphics[width=0.65\columnwidth]{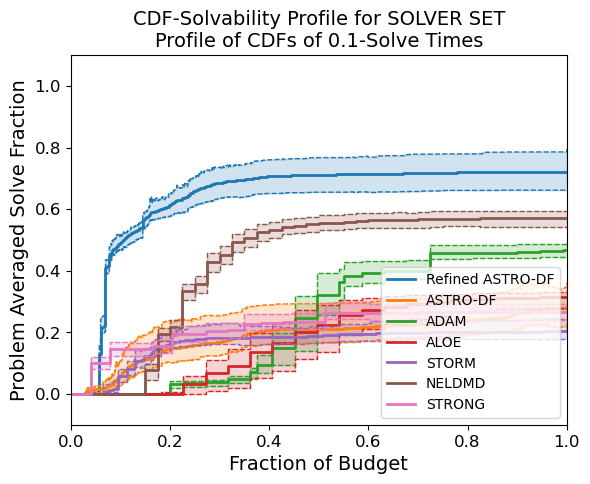}
\caption{Fraction of 60 problems from SimOpt library~\citep{simoptgithub} solved to $0.1$-optimality with 95\% confidence intervals from 20 runs of each algorithm  shows a clear advantage in finite-time performance of ASTRO-DF with coordiate direct search.} \label{fig:entire}
\end{figure}

\begin{figure} [htp]
\centering
\subfloat[20-dim noisy Rosenbrock]{%
\resizebox*{7cm}{!}{\includegraphics{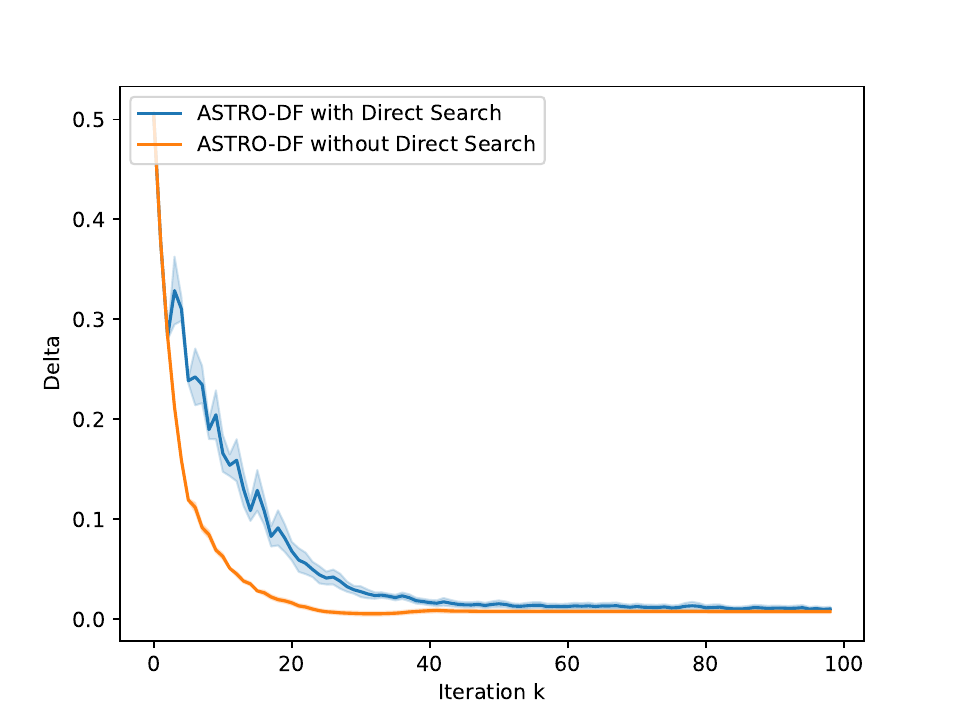}}\label{fig:delta1}}\hspace{5pt}
\subfloat[13-dim Stochastic Activity Network]{%
\resizebox*{7cm}{!}{\includegraphics{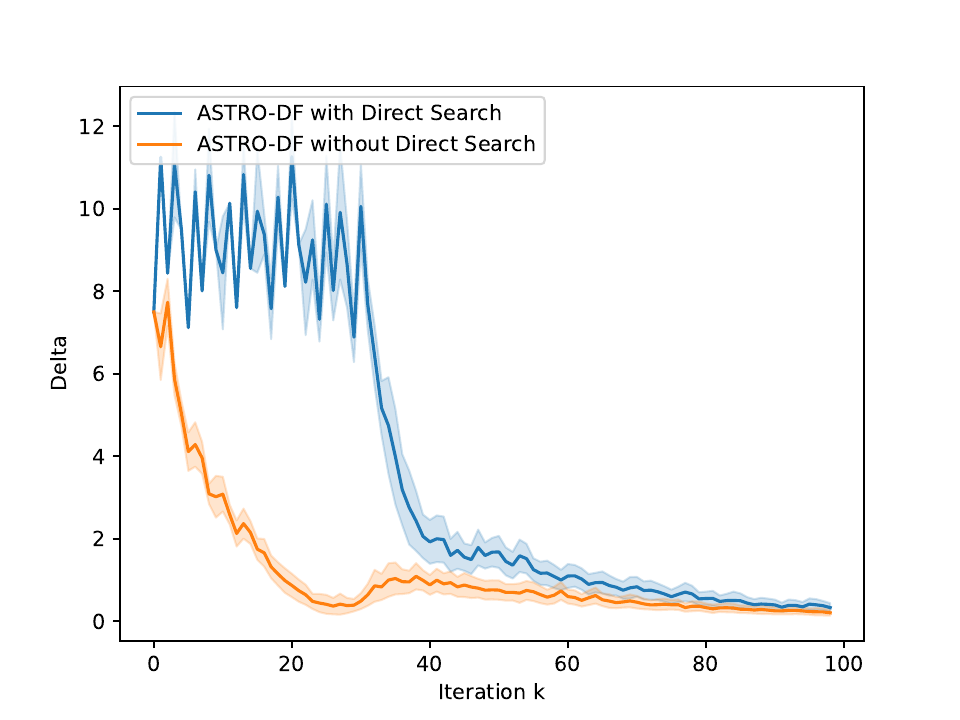}}\label{fig:delta2}}
\caption{Slower rate of trust-region radius decay in the early iterations of the search as a result of using direct search allows for larger step size and hence better progress per iteration early on. The mean and 95\% confidence interval of trajectory of $\Delta_k$ exhibits this behavior for two problems.} \label{fig:delta}
\end{figure}

After a review of existing research related to SDFO in Section \ref{sec:litreview}, we detail the coordinate direct search features of ASTRO-DF in Section \ref{sec:algorithm}. These refinements significantly affect the finite-time performance, as summarized in Figure~\ref{fig:entire} that compares the ASTRO-DF solver that uses coordinate direct search with the state-of-the-art SDFO algorithms. We will also show in Section \ref{sec:complexity} that the refinements made to affect finite-time progress are not harmful to the strong consistency. More importantly, we prove the canonical iteration complexity, that is, $\mcO(\epsilon^{-2})$ for the number of iterations to reach $\epsilon$-optimality in expectation for ASTRO-DF. This result is stronger than the best existing result proven for an algorithm that uses random models by~\cite{chen2018storm} since it relaxes the requirements on model quality and function estimates and their independence from one another and the probability of success in the algorithm~\citep{blanchet2019convergence}. Although difficult to prove analytically, we discuss that the direct coordinate search improves the iteration complexity in the constant terms. Section \ref{sec:numerical} explores extensive numerical experiments on two classes of convex and nonconvex moderate dimensional problems that are considered expensive to solve in the derivative-free setting. In particular, we observe (as evidenced by Figure~\ref{fig:delta}) the intuitive boost in finite-time performance of the algorithm by using direct search since this feature increases the success probability in each iteration (probability of finding a new incumbent) prevents the trust-region size from becoming too small in the early iterations of optimization. Success probability will eventually tend to that of the original ASTRO-DF as $k\to\infty$ since the models become progressively better approximations of the function and more likely to suggest better solutions than the points identified with the fixed geometry (coordinate basis). We will summarize our findings and future needs in Section \ref{sec:conclusion}.

\section{Preliminaries} \label{sec:prelim}
In this section, we review the existing literature for SDFO and introduce notations and essential definitions.
\subsection{Literature Review}
\label{sec:litreview}
In the artificial intelligence era, derivative-free optimization has received much attention for allowing the user to specify the objective function involved in non-explicit forms. As a result, derivative-free optimization has a wide range of applications such as hyper-parameter tuning \citep{ghanbari2017blackbox, ruan2020learning}, reinforcement learning \citep{ choromanski2018rl, fazel2018rl, flaxman2004bandit, salimans2017evolution}, simulation-based optimization \citep{Chang2013STRONG}, and quantum computing \citep{menickelly2022quantum}. An essential characteristic of SDFO is that the function evaluations are only accessible via a black-box simulation with stochastic noise. Running this noisy simulation can be expensive. Hence, one of persistent aims in this field is to improve the efficiency of the algorithms \citep{jin2021probability, paquette2020ls, Sara2018ASTRO}.

Efficiency guarantees of an algorithm for SDFO are secondary to the convergence guarantees that imply the iterates produced by the algorithm stabilize towards a stationary point in some probabilistic sense. An adaptive sampling strategy, which adapts the number of required function evaluations at each design point to the inferred closeness to optimality, leads to almost sure convergence to a stationary point. The intuition behind adaptive sampling is that more accurate estimates are needed for near-optimal solutions to guarantee we can identify better solutions. 

In the existing literature, the efficiency of SDFO algorithms is typically evaluated through a complexity analysis that assesses the costs required to achieve a near-optimal solution. The costs can take different forms, such as the number of iterations or function evaluations, known as ``iteration complexity" and ``work complexity," respectively. Work complexity is a more appropriate metric for gauging the computational load of SDFO algorithms, as the number of function evaluations could vary per iteration for the almost sure convergence. The existing body of work that analyzes SDFO algorithms has little known work complexity guarantees but has focused on attaining the canonical rate of expected iteration complexity $\mbE[T_\epsilon]=\mcO(\epsilon^{-2})$, where $T_\epsilon:=\{k\in\mbN:\|\TD f(\BFX_k)\|\leq\epsilon\}$ is the first iteration reaching $\epsilon$-optimality.
The complexity of SDFO problems proliferates with the problem dimension due to the necessity of the derivative estimator using only expensive, noisy simulations. The method for the derivative approximation and the sample size of each design point are the key factors in the complexity and finite-time performance. The two suggested refinements for ASTRO-DF here aim for more efficient solutions for higher dimensional derivative-free problems.

Recall, the SDFO needs function estimates, and the accuracy of the estimates is important for convergence. Hence, we divide the existing literature based on assumptions on the stochastic error. Let $E(\BFx)$ represent the stochastic error at $\BFx \in \mbR^{d}$, whose expectation may or may not be $0$, hence allowing bias estimates in the algorithms. 

\paragraph{uniformly bounded $|E(\BFx)|$:} Several methods for approximating gradients, such as a finite difference and a gaussian smoothing, are analyzed with bounds for the number of design points to obtain sufficiently accurate estimates, when stochastic error has a uniform constant upper bound \citep{berahas2021gradientest}. $\mcO(\epsilon^{-2})$ is obtained for expected iteration complexity in the generic line search that converges to an optimal neighborhood almost surely~\citep{berahas2021ls}. 

\paragraph{one-sided sub-exponential $E(\BFx)$:}  Given a fixed sample size $n$, $\mbE\left[E(\BFx)\right]=0$ does not hold because $\mbE[|E(\BFx)|]=0$ is not guaranteed. The adaptive line search with oracle estimations algorithm (ALOE)~\citep{jin2021probability} in this setting  converges to a neighborhood of stationarity with high probability tail bound for the iteration complexity  $\mbP\{T_\epsilon \le t\} \ge 1-\mcO(e^{-t})$. The trust-region method with this assumption on the stochastic errors has recently been developed~\citep{cao2022trhighp} exhibiting the same iteration complexity. 

\paragraph{zero-mean $E(\BFx)$ with a finite variance:} The trust region with random models algorithm (STORM)~\citep{chen2018storm} using this assumption converges to a stationary point almost surely with the expected iteration complexity $\mcO(\epsilon^{-2})$ \citep{blanchet2019convergence}. The convergence and complexity analysis for STORM relies on sufficiently replicating the function value at any given point $\BFx$ such that given $\varepsilon > 0$, $\mbP\{|\Ebar(\BFx,N(\BFx))|>\varepsilon\} \le \alpha$ for small enough $\alpha$. Moreover, in order to meet the condition for any $k \in \mbN$, STORM necessitates an extra assumption, namely that the function estimates and local models are independent. This implies that the information obtained in previous iterations cannot be reused in the current iteration, resulting in inefficiencies.

\paragraph{zero-mean sub-exponential $E(\BFx)$ with bounded $v$-th moment for $v\leq 8$:} ASTRO-DF using this assumption converges to a stationary point almost surely, and it requires $\mbP\{|\Ebar(\BFx,N(\BFx))|>\varepsilon\} \le \alpha_k$, where $\{\alpha_k\}$ is monotone decreasing and $\alpha_k\to0$ as $k\to\infty$. 

\paragraph{bounded variance of $E(\BFx)$:}  A backtracking Armijo line search method \citep{paquette2020ls} achieves $\mcO(\epsilon^{-2})$ as the expected iteration complexity if a variance of $E$ converges to 0 as $k \rightarrow \infty$.

\subsection{Notation}
We use bold font for vectors; $\BFx=(x_1,x_2,\cdots,x_d)\in\mbR^d$ denotes a $d$-dimensional vector of real numbers. Let $\BFe^{i} \in \mbR^d$ for $i=1,\dots,d$ denote the standard unit basis vectors in $\mbR^d$. We use calligraphic fonts for sets and sans serif fonts for matrices. $\mcB(\BFx;\Delta)=\{\BFx+\BFs\in\mbR^d:\|\BFs\|_2\leq\Delta\}$ denotes the closed ball of radius $\Delta>0$ with center $\BFx$. For a sequence of sets $\{\mcA_n\}$, the set $\{\mcA_n \ \text{i.o.}\}$ denotes $\limsup_{n} \mcA_n$. 
We say $f(\BFx)=\mcO(g(\BFx))$ if there exist positive numbers $\varepsilon$ and $M$ such that $|f(\BFx)|\le Mg(\BFx)$ for all $\BFx$ with $0 < |\BFx| < \varepsilon$. 
We use $a\land b:=\min\{a,b\}$.
We use capital letters for random scalars and vectors. For a sequence of random vectors $\{\BFX_k\},\ k\in\mbN$, $\BFX_k \xrightarrow{w.p.1}\BFX$ denotes convergence almost surely. The term ``iid", abbreviates independent and identically distributed. 

\subsection{Definitions} Next, we introduce several definitions in all of which, we start with a center point $\BFx$ (later, the incumbent solution $\BFX_k$) and the trust-region radius $\Delta$ (later, $\Delta_k$ that changes with $k$).
\begin{definition}(stochastic polynomial interpolation models).
    Given $\BFx\in\mbR^d$ and $\Delta>0$, let $\Phi(\BFx)=(\phi_0(\BFx),\phi_1(\BFx), \dots, \phi_q(\BFx))$ be a polynomial basis on $\mbR^d$. With $p=q$ and the design set $\mcX:=\{\BFx^{0}, \BFx^{1}, \dots , \BFx^{p}\}\subset \mcB(\BFx;\Delta)$, we can find $\BFbeta = (\beta_0,\beta_1, \dots, \beta_p)$ such that $\sfM(\Phi, \mcX) \BFbeta = \Fbar(\mcX,N(\mcX)),$
    where \\ 
    \begin{equation*}
        \sfM(\Phi, \mcX) = 
        \begin{bmatrix}
        \phi_1(\BFx^{0}) & \phi_2(\BFx^{0}) & \cdots & \phi_q(\BFx^{0})  \\
        \phi_1(\BFx^{1}) & \phi_2(\BFx^{1}) & \cdots & \phi_q(\BFx^{1})  \\
        \vdots & \vdots & \vdots & \vdots \\
        \phi_1(\BFx^{p}) & \phi_2(\BFx^{p}) & \cdots & \phi_q(\BFx^{p})  \\
        \end{bmatrix},   
        \Fbar(\mcX,N(\mcX)) =
        \begin{bmatrix}
        \Fbar(\BFx^{0},N(\BFx^{0})) \\
        \Fbar(\BFx^{1},N(\BFx^{1})) \\
        \vdots \\
        \Fbar(\BFx^{p},N(\BFx^{p})) \\
        \end{bmatrix}.
    \end{equation*}
    We note $\BFx^{0}:=\BFx$. If the matrix $\sfM(\Phi, \mcX)$ is nonsingular, the set $\mcX$ is poised in $\mcB(\BFx;\Delta)$. 
    Then, the function $M:\mcB(\BFx;\Delta) \to \mbR$, defined as $M(\BFx) = \sum_{j=0}^{p} \beta_j \phi_{j}(\BFx)$ is a stochastic polynomial interpolation model of $f$ on $\mcB(\BFx;\Delta)$. 
    Let $\BFG=
    \begin{bmatrix}
    \beta_1 & \beta_2 & \cdots & \beta_d 
    \end{bmatrix}^\intercal$ be the subvector of $\BFbeta$ and $\sfH$ be a symmetric matrix of size $d\times d$ with elements uniquely defined by $\begin{bmatrix}
    \beta_{d+1} & \beta_{d+2} & \cdots & \beta_p 
    \end{bmatrix}^\intercal$. 
    Then, we can define the stochastic quadratic model $M: \mcB(\BFx;\Delta)\to\mbR$, as
\begin{equation}
    M(\BFx+\BFs) = \beta_0 +  \BFs^\intercal \BFG + \frac{1}{2} \BFs^\intercal \sfH\BFs.\label{eq:mdefn}
\end{equation}
\label{defn:polyintermd}

\end{definition}
\begin{definition} (stochastic quadratic models with diagonal Hessians) A special case of (\ref{eq:mdefn}) is when the Hessian has only diagonal values, i.e.,
\begin{equation}
    \sfH = 
    \begin{bmatrix}
     H_{1} &  & \bold{0}   \\
     & \ddots &   \\
     \bold{0} &  &   H_{d}  \\
    \end{bmatrix}
    \in \mbR^{d\times d}. \label{eq:DH}
\end{equation}
In the stochastic quadratic interpolation model with diagonal Hessian, $p=2d$, the model \eqref{eq:mdefn} contains $2d+1$ unknowns, and $2d+1$ function value estimations are needed to uniquely determine the $\BFG$ and $\sfH$, letting the interpolation set be 
$$\mcX_{cb} = \{\BFx^{0}, \BFx^{0}+\BFe^{1} \Delta, \dots, \BFx^{0}+\BFe^{d} \Delta, \BFx^{0}-\BFe^{1} \Delta, \dots, \BFx^{0}-\BFe^{d} \Delta \}$$ contained in $\mcB(\BFx^{0};\Delta)$. Since the coordinate basis is used to generate the interpolation set, $\BFbeta$ is guaranteed to exist. Hence, $H_i=\alpha_{d+i}\le \infty$ for all $i=1,2,\dots,d$. In this case, $\Phi(\BFx) := (1,x_1, x_2, \dots, x_d, x_1^2, x_2^2, \dots, x_d^2)$,
and $M(\cdot)$ is said to be a stochastic quadratic model with diagonal Hessian. 
\label{defn:diagHess}
\end{definition}

\begin{definition} (stochastic fully linear models) Given $\BFx \in \mbR^d$ and $\Delta > 0$, a function $M:\mcB(\BFx;\Delta) \to \mbR$ obtained following Definition~\ref{defn:diagHess} is the stochastic fully linear model of $f$ on $\mcB(\BFX;\Delta)$ if $\TD f$ is Lipschitz continuous with constant $\kappa_{L}$, and there exist positive constants $\kappa_{eg}$ and $\kappa_{ef}$ dependent on $\kappa_{L}$ but independent of $\BFx$ and $\Delta$ such that almost surely
\begin{equation*}
    \|\TD f(\BFx) - \TD M(\BFx)\| \le \kappa_{eg}\Delta\text{ and } |f(\BFx) - M(\BFx)| \le \kappa_{ef}\Delta^2\ \forall \BFx\in\mcB(\BFx;\Delta).
\end{equation*}
\label{defn:fullylinear}
\end{definition}

\begin{definition} (Cauchy reduction) Given $\BFx \in \mbR^d$ and $\Delta > 0$, a function $M:\mcB(\BFx;\Delta) \to \mbR$ obtained following Definition~\ref{defn:diagHess}, $\BFs^c$ is called the Cauchy step if 
    \begin{equation}
        M(\BFx)-M(\BFx+\BFs^c) \ge \frac{1}{2}\|\TD M(\BFx)\|\left( \frac{\|\TD M(\BFx)\|}{\|\TD^2 M(\BFx)\|}\land \Delta \right).
    \end{equation}
    We assume that $\|\TD M(\BFx)\|/\|\TD^2 M(\BFx)\|=+\infty$ when $\|\TD^2 M(\BFx)\|=0$. The Cauchy step is obtained by minimizing the model $M(\cdot)$ along the steepest descent direction within $\mcB(\BFx;\Delta)$ and hence easy and quick to obtain.
\label{defn:cauchyred}
\end{definition}


\begin{definition} (Filtration and Stopping Time). A filtration $\{\mcF_{k}\}_{k \geq 1}$ over a probability space $(\Omega,\mathbb{P},\mcF)$ is defined as an increasing family of $\sigma$-algebras of $\mcF$, i.e., $\mcF_{k} \subset \mcF_{k+1} \subset \mcF$ for all $k$. We interpret $\mcF_{k}$ as ``all the information available at time $k$." A filtered space $(\Omega,\mathbb{P},\{\mcF_{k}\}_{k\geq1},\mcF)$ is a probability space equipped with a filtration.
A map $N:\Omega \rightarrow \{0,1,2,\dots,\infty\}$ is called a stopping time with respect to $\mcF_{k}$ if the event $\{N = n\} := \{\omega: N(\omega) = n\} \in \mcF_{k}$ for all $n \leq \infty$.
\end{definition}

\section{ASTRO-DF with Coordinate Direct Search}
\label{sec:algorithm}
The ASTRO-DF utilizing a coordinate direct search has four repeating stages: (a) diagonal Hessian local model construction via interpolation on function value estimates of coordinate basis set with adaptive sample sizes, (b) model candidate  identification via an inexact optimization of the local model, (c) function value estimation at the model candidate solution and  direct search candidate identification, (d) trust-region radius and incumbent updating. Algorithm~\ref{alg:ASTRO-DF-refined} details the operations in each iteration.

\begin{algorithm}[htp]  
\caption{ASTRO-DF with Coordinate Direct Search }
\label{alg:ASTRO-DF-refined}
\begin{algorithmic}[1]
\REQUIRE Initial guess $\BFx_{0}\in\mbR^d$, initial and maximum trust-region radius $\Delta_{0},\Delta_{\max}>0$, model ``fitness'' and certification thresholds $\eta\in(0,1)$ and $\mu>0$, sufficient reduction constant $\theta>0$, expansion and shrinkage constants $\gamma_1>1$ and $\gamma_2\in(0,1)$, sample size lower bound $\lambda_k$, and adaptive sampling constant $\kappa>0$.
\FOR{$k=0,1,2,\hdots$}
 
\STATE \label{ASTRO:TRsubprob} \textit{Model Construction:} Select the design set $\mcX_{k}=\left\{ \BFX_{k}^{i}\right\}_{i=0}^{2d}\subset\mcB(\BFX_{k};\Delta_{k})$ following Definition \eqref{eq:mdefn} and estimate $\Fbar\left(\BFX_{k}^{i},N\left(\BFX_{k}^{i}\right)\right)$, where
\begin{equation}
N\left(\BFX_{k}^{i}\right)=\min\biggl\{ n\geq \lambda_k:\frac{\sigmahat\left(\BFX_{k}^{i},n\right)}{\sqrt{n}}\leq\frac{\kappa\Delta_{k}^{2}}{\sqrt{\lambda_k}}\biggr\},\label{eq:inner-sampling-interpolation}
\end{equation} for $i=0,1,\ldots,2d$ ($\BFX_k^0=\BFX_k$) and construct the model $M_{k}\left(\BFX_{k}+\BFs\right)$ via interpolation.

\STATE \label{ASTRO:TRsubprob} \textit{Subproblem:} Approximate the $k$-th step by minimizing the model in the trust-region, $\BFS_{k}=\argmin_{\left\| \BFs\right\| \leq \Delta_{k}}M_{k}(\BFX_k+\BFs)$, and set $\BFXtilde_{k+1}=\BFX_k+\BFS_{k}$. 
\STATE \label{ASTRO:evaluate}\textit{Candidate Evaluation:} Use adaptive sampling rule~\eqref{eq:inner-sampling-interpolation}  to estimate $\Fbar(\BFXtilde_{k+1},\Ntilde_{k+1})$. Define the best design point $\BFXhat_{k+1}=\argmin_{\BFx\in\mathcal{X}_{k}\backslash\BFX_k}\Fbar(\BFx, N(\BFx))$, its sample size $\hat{N}_{k+1} = N(\BFXhat_{k+1})$, incumbent's sample size $\Nhat_k = N\left(\BFX_k\right)$, direct search reduction $\Rhat_k=\Fbar(\BFX_{k},\Nhat_{k})-\Fbar (\BFXhat_{k+1},\Nhat_{k+1})$, subproblem reduction   $\Rtilde_k=\Fbar(\BFX_{k},\Nhat_{k})-\Fbar (\BFXtilde_{k+1},\Ntilde_{k+1})$, and model reduction $R_k=M_{k}(\BFX_{k})-M_{k}(\BFXtilde_{k+1})$. 
\STATE \label{ASTRO:ratio} \textit{Update:} Set
$(\BFX_{k+1},N_{k+1},\Delta_{k+1})=$ \[
\begin{cases}
    (\BFXhat_{k+1},\Nhat_{k+1},\gamma_1\Delta_{k}\land \Delta_{\max})& \text{if } \Rhat_{k}>\max\{\Rtilde_k,\theta\Delta_{k}^2\}, \\
    (\BFXtilde_{k+1},\Ntilde_{k+1},\gamma_1\Delta_{k}\land \Delta_{\max})              & \text{else if }\Rtilde_{k}\geq\eta R_{k} \text{ and }\mu\|\TD M_{k}(\BFX_{k})\|\ge\Delta_{k},\\
    (\BFX_{k},\Nhat_{k},\Delta_{k}\gamma_2)              & \text{otherwise},
\end{cases}
\] and $k=k+1$.
\ENDFOR
\end{algorithmic}
\label{alg:main-stoch}
\end{algorithm}

The main changes from the original ASTRO-DF algorithm, as developed by \cite{Sara2018ASTRO}, and their consequent advantages are listed below:
\paragraph{Model Construction} The point selection is deterministic and the local model $M_k(\cdot)$ is constructed with $2d$ new points along the coordinate basis at the trust-region boundary in each iteration to obtain a diagonal Hessian model. As mentioned in Subsection \ref{sec:contribution}, the suggested design set has several advantages. Firstly, the model gradient error for all points within a trust-region has a similar upper bound but with fewer design points. We will also show in Theorem \ref{thm:modelerror_stoch_iterate} that the gradient error at the center point improves to $\mcO(\Delta_k^2+E/\Delta_k)$ from $\mcO(\Delta_k+E/\Delta_k)$. When $\Delta_k<1$, the stochastic errors are allowed to be larger by $\mcO(\Delta_k^2-\Delta)$ to have the same upper bound, resulting in a smaller sample size. Therefore, by utilizing a fixed geometry following a coordinate basis, we achieve the same upper bound for the gradient error while using fewer design points and smaller sample sizes for each point. In other words, the model quality increases with less work! 
Secondly, the design set via coordinate basis is optimal in the sense that it minimizes the constant of well-poisedness in a ball, i.e., well-spread within the trust-region, saving the linear algebra costs to make a well-poised set \citep{tom2023optimalpoised}. The computational complexity of the algorithm for ensuring well-poisedness also depends on the problem dimension. For example, Algorithms 6.1-6.3 in \cite{katya:DFObook} need $\mcO(d^2)$ operations for a linear model and $\mcO(d^6)$ for a quadratic model. Lastly, when using the predetermined design set, the Vandermonde matrix $\sfM(\Phi, \mcY)$ exhibits a unique structure, resulting in a reduced matrix inversion cost of $\mcO(d^2)$ instead of $\mcO(d^3)$.

Furthermore, the certification loop in the model construction of the original ASTRO-DF, which repeatedly shrinks $\Delta_k$ and rebuilds the model with new points in the smaller neighborhood, has now been removed. Instead, the criteria are checked once at the points to determine whether to accept the model candidate if the interpolation candidate fails to pass its sufficient reduction test.

\paragraph{Updating next incumbent} The next incumbent is determined upon estimating the function at the subproblem's minimizer. If the best among $2d+2$ points (interpolation points, incumbent, and model candidate) is an interpolation point that satisfies a reduction of at least $\theta\Delta_k^2$ in the estimated function value, that point becomes the next incumbent; otherwise, the model candidate is accepted if the decrease in the estimated function value is at least $\eta$ times the reduction in the model and the model passes its certification criteria of $\mu\|\TD M_k(\BFX_k)\|\geq\Delta_k$. 


Skipping model construction and choosing the incumbent from the design points \emph{directly} turns this algorithm into a mesh adaptive direct search akin to NOMAD~\citep{audet2021nomad}. Direct search is one of the numerical optimization algorithms that, unlike trust regions or line search that use derivative information, only relies on function evaluations of a design set to determine whether to accept or reject a design point as the next incumbent. 
In ASTRO-DF, choosing a design point that outperforms the model candidate point, albeit with a threshold of sufficient reduction, as the next incumbent will increase \textit{the probability of success}. The threshold of sufficient reduction is necessary because a model candidate point is considered qualified if the model is fit, i.e., provides good prediction of the function that renders the improvement in the function estimate a factor of $\Delta_k^2$, provided that the model is certified. In the absence of a model, one needs an alternative criteria to maintain the quality of the next incumbent. But interestingly, the inclusion of direct search also enables accepting a point that would \emph{not} yield a reduction that is more than $\eta R_k$ if that quantity is more than $\theta\Delta_k^2$. In other words, acceptance criteria for the new incumbent becomes less strict. However, note that such improved success probability takes effect in the early iterations, hence helping the finite-time performance. As the iterations progress, the model tends to approximate the function more accurately in smaller neighborhoods, reducing the likelihood of the model candidate point underperforming the interpolation points.

\paragraph{Simulation budget} The deterministic lower bound sequence for the sample sizes, i.e., $\{\lambda_k,k\in\mbN\}$ now grows logarithmically instead of linearly with $k$. This leads to a slower growth in the minimum sample size and savings of the simulation budget.

\section{Consistency and Complexity Analysis} \label{sec:complexity} 
This section presents the convergence and complexity analysis of ASTRO-DF with coordinate direct search. We first list the assumptions and useful results from the original ASTRO-DF. The needed assumptions to obtain the convergence and complexity results are standard and not restrictive, allowing for use of Bernstein inequality to bound the tail of sums of subexponential random variables that are not independent of one another. 

\begin{assumption} (function)
    The function $f$ is twice continuously differentiable in an open domain $\Omega$, $\TD f$ is Lipschitz continuous in $\Omega$ with constant $\kappa_{Lg}>0$, and $\TD^2 f$ is Lipschitz continuous in $\Omega$ with constant $\kappa_{L}>0$.
    \label{assum:fn}
\end{assumption}

\begin{assumption} (stochastic noise)
    The Monte Carlo oracle generates iid random variables $F(\BFX^{i}_k,\xi_j) = f(\BFX^{i}_k)+E^{i}_j$ with $E^{i}_j\in\mcF_{k,j}$,where $\mcF_k:=\mcF_{k,0}\subset\mcF_{k,1}\subset\dots\subset \mcF_{k+1}$ for all $k$. In addition, let the design set be $\{\BFX_k^{i}\}_{i=0,1,\cdots,p}\in\mcF_{k-1}$. 
    Then the stochastic errors $E_{k,j}^{i}$ are independent of $\mcF_{k-1}$, $\mbE[E_{k,j}^{i} \left\vert \mcF_{k,j-1} \right.]=0$,
    and there exists $\sigma^2>0$ and $b>0$ such that for a fixed $n$, $$\frac{1}{n} \sum_{j=1}^n \mbE[|E_{k,j}^{i}|^m \left\vert \mcF_{k,j-1}\right.] \leq\frac{m!}{2}b^{m-2}\sigma^2,\ \forall m=2,3,\cdots,\forall k.$$
    Furthermore, for a given solution at iteration $k$ and constant $c>0$, there exists a large  $c_0>0$ such that for all $\lambda_k\leq n\leq N(\BFX_k^i)$,  $$\limsup_{c\to\infty}\sup_{c_0\leq t\leq c}\frac{\mbP\{\frac{1}{n-1}\sum_{j=1}^{n-1}|E_{k,j}^i|>c-t \mid |E_{k,n}^i|=t\}}{\mbP\{\frac{1}{n-1}\sum_{j=1}^{n-1}|E_{k,j}^i|>c-t\}}<\infty.$$ In other words, the ratio above is $\mcO(1)$ for all $t\in[c_0,c]$.
    \label{assum:error}
\end{assumption}
This assumption enables characterization of the tail probability of a sequence of dependent random variables that each have a sub-exponential like tail behavior in order for the Bernstein inequality bounds to be applicable. The criteria for the stochastic noise in Assumption~\ref{assum:error} are less stringent than boundedness or sub-Gaussian noise. See~\cite{ha2023siamopt} for more details.

\begin{assumption} (model)
    There exists some constant $\kappa_{fcd}\in(0,1]$ such that for all $k$, $M_k(\BFX_k) - M_k(\BFX_k+\BFS_k) \ge \kappa_{fcd}[M_k(\BFX_k) - M_k(\BFX_k+\BFS^c_k)]$, where $\BFS_k^c$ is the Cauchy step. Additionally, there exists $\kappa_\sfH\in(0,\infty)$ such that $\|\sfH_k\|\leq\kappa_\sfH$ for all $k$ with probability 1.\label{assum:fcd}
\end{assumption}

\begin{assumption} (sample size)
    The ``lower-bound sequence" $\{\lambda_k\}$ on the adaptive sample sizes satisfies $(\log k)^{1+\epsilon_\lambda}= \mcO(\lambda_k)$ 
    for some $\epsilon_\lambda \in (0,1)$.
    \label{assum:lambda}
\end{assumption}

\subsection{Useful Existing Results}
In this section, we introduce several useful results to obtain the almost sure convergence and complexity of ASTRO-DF with coordinate direct search. The first result implies that the estimation error with adaptive sampling is sufficiently small for large enough iterations with probability 1. 

\begin{theorem} \label{thm:bddE} 
    (Theorem 2.7 by \cite{ha2023siamopt})
    Let Assumptions \ref{assum:error} and \ref{assum:lambda} hold and define  $|\Ebar_k|:=N_k^{-1}\sum_{j=1}^{N_k}E_j$. Then for a given $c>0$, we have that $|\Ebar_k|\leq c \Delta_k^2$ for sufficiently large $k$ almost surely. In other words, $\mbP\{|\Ebar_k|\geq c\Delta_k^2\text{ i.o.}\}=0.$
\end{theorem}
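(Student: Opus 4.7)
My overall strategy is to prove the almost-sure bound via the first Borel--Cantelli lemma: it suffices to show $\sum_{k=1}^\infty \mbP\{|\Ebar_k|\geq c\Delta_k^2\} < \infty$ for every $c>0$. The adaptive sampling rule \eqref{eq:inner-sampling-interpolation} gives the scaling $\sigmahat(\BFX_k,N_k)/\sqrt{N_k} \lesssim \Delta_k^2/\sqrt{\lambda_k}$, so intuitively $|\Ebar_k|$ should be of order $\Delta_k^2$ with high probability. The delicate part is that $N_k$ is a stopping time whose value depends on the very errors $E_{k,j}$ one is trying to bound, so standard Bernstein inequalities cannot be invoked directly on a sum with a random number of terms.

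The plan is therefore to decouple $N_k$ from $\{E_{k,j}\}$ by conditioning. I would first introduce a deterministic lower estimate $\ell_k$ and a deterministic upper estimate $u_k$ for $N_k$: on the ``good'' event, $\lambda_k \leq N_k$ by construction, and an upper bound $u_k = O(\sigma^2/\Delta_k^4)$ can be derived from the stopping criterion combined with Assumption~\ref{assum:error} (which bounds the true variance) once $\sigmahat^2$ is close to $\sigma^2$ (a standard consequence of Bernstein for sample variances of sub-exponential variables). Write
\begin{equation*}
\mbP\{|\Ebar_k|\geq c\Delta_k^2\} \;\leq\; \mbP\{|\Ebar_k|\geq c\Delta_k^2,\; \lambda_k\leq N_k\leq u_k\} \;+\; \mbP\{N_k > u_k\}.
\end{equation*}
The second term is controlled by a deviation bound on $\sigmahat^2$ and decays exponentially in $\lambda_k$. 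For the first term, I would union-bound over possible values of $N_k$,
\begin{equation*}
\mbP\{|\Ebar_k|\geq c\Delta_k^2,\; \lambda_k\leq N_k\leq u_k\} \;\leq\; \sum_{n=\lambda_k}^{u_k} \mbP\Bigl\{\bigl|\tfrac{1}{n}\smsum_{j=1}^{n}E_{k,j}\bigr|\geq c\Delta_k^2,\, N_k=n\Bigr\}.
\end{equation*}
For each fixed $n$, the event $\{N_k=n\}$ depends on $E_{k,1},\ldots,E_{k,n}$, so to retrieve independence I would condition on the ``last'' summand and use Assumption~\ref{assum:dependence}: the assumption precisely says that the conditional-to-unconditional probability ratio remains $\mcO(1)$ uniformly, which allows trading the joint tail for a tail of the unconditional partial sum. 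That unconditional tail is then handled by Bernstein's inequality for sub-exponential martingale differences (Assumption~\ref{assum:error}), yielding a bound of the form $2\exp\bigl(-C\, n\Delta_k^4/(\sigma^2+b\Delta_k^2)\bigr)$.

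Summing these exponential bounds over $n$ from $\lambda_k$ to $u_k$ and noting $n\Delta_k^4\geq \lambda_k\Delta_k^4$, I expect the resulting bound for the $k$-th term to be of the order $u_k \exp(-C\,\lambda_k\Delta_k^4/\sigma^2)$. The argument inside the exponential needs to dominate $\log k$ so that the series over $k$ converges; this is exactly where Assumption~\ref{assum:lambda} (namely $(\log k)^{1+\epsilon_\lambda}=\mcO(\lambda_k)$) pays off, together with the fact that $\Delta_k^4$ can only shrink geometrically because $\Delta_{k+1}\geq \gamma_2 \Delta_k$. Concretely, a separate argument (contained in the companion paper) shows $\Delta_k^{-1}$ grows at most polynomially in $k$ along every sample path where it might shrink persistently, so $\lambda_k\Delta_k^4 \gg \log k$ eventually, and the sum is summable. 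An application of Borel--Cantelli then concludes $\mbP\{|\Ebar_k|\geq c\Delta_k^2 \text{ i.o.}\}=0$.

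The main obstacle, as indicated, is the coupling between $N_k$ and the errors: the naive hope that $\Ebar_k = \tfrac{1}{N_k}\sum_{j=1}^{N_k} E_{k,j}$ inherits the concentration of fixed-sample means is false without Assumption~\ref{assum:dependence}, because stopping early on an atypically small running variance biases $\Ebar_k$. Assumption~\ref{assum:dependence} is the key tool that makes the conditional tail manageable and glues together the Bernstein bound with the stopping-time structure; verifying that the ratio bound it imposes is compatible with the sub-exponential tail (Assumption~\ref{assum:error}) is the most delicate bookkeeping step of the proof.
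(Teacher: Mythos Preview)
The paper does not actually prove this theorem; it is imported verbatim as Theorem~2.7 of the companion work \citep{ha2023siamopt} under ``Useful Existing Results,'' with only the subsequent Remark hinting that Bernstein's inequality is the engine. So there is no in-paper proof to compare against, but your Borel--Cantelli/Bernstein scaffolding and your use of Assumption~\ref{assum:dependence} to untangle the stopping time from the summands are in line with that hint and with the evident purpose of Assumption~\ref{assum:dependence}.

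That said, your sketch has a real gap at the point where you bound the Bernstein exponent. You write $n\Delta_k^4\ge\lambda_k\Delta_k^4$ and then claim $\lambda_k\Delta_k^4\gg\log k$ because ``$\Delta_k^{-1}$ grows at most polynomially in $k$.'' Both steps fail. First, $\Delta_k$ can shrink geometrically (a run of unsuccessful iterations gives $\Delta_k=\gamma_2^{k}\Delta_0$), so no a~priori polynomial bound on $\Delta_k^{-1}$ is available; any such control in the paper (e.g.\ Lemma~\ref{thm:delta_epsilon}) is itself a \emph{consequence} of the present theorem, so invoking it here is circular. Second, even under a polynomial bound $\Delta_k^{-1}\lesssim k^{a}$, one gets $\lambda_k\Delta_k^4\lesssim(\log k)^{1+\epsilon_\lambda}k^{-4a}\to 0$, so the exponential does not beat $\log k$.

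The missing idea is that the adaptive sampling rule~\eqref{eq:inner-sampling-interpolation} itself supplies the right scaling, independently of $\Delta_k$. At the stopping point $N_k=n$ one has $\sigmahat^2(n)/n\le\kappa^2\Delta_k^4/\lambda_k$, i.e.\ $n\Delta_k^4\ge\lambda_k\,\sigmahat^2(n)/\kappa^2$. On the high-probability event that $\sigmahat^2(n)$ is bounded away from zero (which your sub-exponential control on the sample variance already handles), this yields $n\Delta_k^4\gtrsim\lambda_k$ with no dependence on $\Delta_k$. The Bernstein exponent is then of order $\lambda_k\gtrsim(\log k)^{1+\epsilon_\lambda}$, giving a tail $\exp\bigl(-C(\log k)^{1+\epsilon_\lambda}\bigr)=o(k^{-p})$ for every $p>0$, which is summable. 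This is precisely why Assumption~\ref{assum:lambda} is the correct growth rate, and it removes the need for any sample-path control on $\Delta_k$ in this argument.
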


\begin{remark}
    Because of the way $N_k$ is determined and Assumption \ref{assum:error} and \ref{assum:lambda}, one can use the Bernstein inequality to write $\mbP\{|\Ebar_k|\geq c\Delta_k^2\ \mid\mcF_k\}\leq \alpha k^{-1+\epsilon_\lambda}$ for any $c>0$ and some $\alpha>0$; i.e., the estimate is sufficiently accurate with a probability that tends to one as $k\to\infty$. In contrast to the assumption of probababilisitcally accurate estimates with a fixed probability and fixed accuracy threshold~\citep{chen2018storm}, the estimators' property here is ensured as a result of adaptive sampling rule devised and not assumed.
\end{remark}

The second existing result characterizes the stochastic model error with estimation error when the stochastic model is constructed by Definition \ref{defn:diagHess}.

\begin{lemma} \label{lemma:stochastic_interp}
    (Lemma 2.9 by \cite{Sara2018ASTRO} and Proposition 3.1 by \cite{tom2023optimalpoised})
    Let Assumption \ref{assum:fn} hold and let $M(\cdot)$ be a stochastic quadratic interpolation model with diagonal Hessian of $f$ on $\mcB(\BFX;\Delta)$. Let $m(\cdot)$ be the corresponding deterministic polynomial interpolation model of $f$ on $\mcB(\BFX;\Delta)$. Then for all $\BFx \in \mcB(\BFX;\Delta)$,
    \begin{equation*}
        |M(\BFx)-m(\BFx)|\le (2d+1)\max_{i=0,1,\dots,2d}|\Fbar(\BFX^{i},n(\BFX^{i}))-f(\BFX^{i})|.
    \end{equation*}
\end{lemma}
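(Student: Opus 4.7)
The plan is to use the Lagrange representation of the interpolation: since the stochastic interpolant $M$ and its deterministic counterpart $m$ are built on the same design set $\mcX_{cb}$ and the same diagonal-Hessian polynomial basis $\Phi(\BFx)=(1,x_1,\dots,x_d,x_1^2,\dots,x_d^2)$, they admit a common Lagrange form
\[ M(\BFx) = \sum_{j=0}^{2d}\ell_j(\BFx)\,\Fbar(\BFX^{j},n(\BFX^{j})), \qquad m(\BFx)=\sum_{j=0}^{2d}\ell_j(\BFx)\,f(\BFX^{j}),\]
with \emph{identical} Lagrange polynomials $\ell_j$. Subtracting immediately gives
\[ |M(\BFx)-m(\BFx)| \leq \Bigl(\sum_{j=0}^{2d}|\ell_j(\BFx)|\Bigr)\,\max_{j}|\Fbar(\BFX^{j},n(\BFX^{j}))-f(\BFX^{j})|,\]
so the claim reduces to bounding the Lebesgue constant $\sum_j|\ell_j(\BFx)|$ by $2d$ on $\mcB(\BFX^{0};\Delta)$.

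Next I would compute the Lagrange polynomials explicitly. The point of the coordinate design is that the interpolation equations decouple across coordinates: writing $\BFs = \BFx-\BFX^{0}$ and solving the $3\times 3$ system that couples only the $i$-th coordinate (using the equations at $\BFX^{0}$, $\BFX^{0}+\BFe^{i}\Delta$, and $\BFX^{0}-\BFe^{i}\Delta$) yields
\[ \ell_0(\BFs) = 1 - \sum_{i=1}^d (s_i/\Delta)^2, \qquad \ell_{i\pm}(\BFs) = \frac{s_i(s_i\pm\Delta)}{2\Delta^2}.\]
A quick sign check using $|s_i|\le\Delta$ shows that $\ell_{i+}$ and $\ell_{i-}$ always have opposite signs and that $|\ell_{i+}(\BFs)|+|\ell_{i-}(\BFs)|=|s_i|/\Delta$; meanwhile $0\le \ell_0(\BFs)=1-\|\BFs\|_2^2/\Delta^2\le 1$ whenever $\|\BFs\|_2\le \Delta$.

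Summing the coordinate-wise bounds and applying the standard inequality $\|\BFs\|_1 \leq \sqrt{d}\,\|\BFs\|_2$, one obtains
\[ \sum_{j=0}^{2d}|\ell_j(\BFx)| \;\leq\; 1 + \sum_{i=1}^d\frac{|s_i|}{\Delta} \;\leq\; 1 + \sqrt{d},\]
and since $1+\sqrt{d}\le 2d$ for every $d\geq 1$, the stated bound follows at once. The only genuinely non-routine step is the explicit derivation and sign analysis of the Lagrange polynomials; the rest is a straightforward Lebesgue-constant estimate. I would not expect Assumption~\ref{assum:fn} to be needed here, because we are comparing two polynomial interpolants of different data at the \emph{same} nodes, with no Taylor remainder to control.
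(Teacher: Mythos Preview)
Your argument is correct. The paper does not give its own proof of this lemma; it simply cites Lemma~2.9 of \cite{Sara2018ASTRO} together with the poisedness result of \cite{tom2023optimalpoised}, and your Lagrange--Lebesgue computation is precisely the mechanism behind those citations: write $M-m=\sum_j\ell_j(\cdot)\bigl(\Fbar(\BFX^{j},n(\BFX^{j}))-f(\BFX^{j})\bigr)$ and bound $\sum_j|\ell_j|$ on the ball. The explicit formulas you derive for $\ell_0,\ell_{i\pm}$ are correct, the sign analysis is clean (with the harmless caveat that ``opposite signs'' should read ``opposite signs or one vanishes'' at the boundary $|s_i|=\Delta$), and your final Lebesgue bound $1+\sqrt{d}$ is in fact strictly sharper than the $2d$ stated in the lemma. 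The cited references instead combine the generic bound $\sum_{j=0}^{p}|\ell_j(\BFx)|\le (p+1)\Lambda$ with the fact that the coordinate design achieves poisedness constant $\Lambda=1$; your direct computation bypasses that two-step route and yields a better constant. Your closing remark is also right: Assumption~\ref{assum:fn} plays no role in bounding $|M-m|$, since this is a purely algebraic comparison of two interpolants at identical nodes.
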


\begin{remark}
    A direct observation from Lemma~\ref{lemma:stochastic_interp}, Theorem~\ref{thm:modelerror_stoch_iterate}, and Appendix \ref{app:kappa-ef} is that $$\mbP\{\|\TD f(\BFx)-\TD M_k(\BFx)\|\geq(c_{eg}+\kappa_{eg})\Delta_k\text{ and }| f(\BFx)-M_k(\BFx)|\geq(c_{ef}+\kappa_{ef})\Delta_k^2\mid\mcF_k\}\leq\alpha' k^{-1+\epsilon_\lambda},$$ for some $\alpha'>0$, $c_{eg}>0$, and $c_{ef}>0$ and for all $\BFx\in\mcB(\BFX_k;\Delta_k)$. In other words, the stochastic model is probabilistically fully linear with a probability that goes to one as $k\to\infty$. Again, despite the resemblance to the assumption of probabilistically fully linear models with fixed probability~\citep{chen2018storm}, this result is ensured without overburdening the computation, as we will see in the obtained complexity results. 
\end{remark}

\subsection{Model Quality}
To prove consistency, we first look at the model quality with the suggested design set in Definition \ref{defn:diagHess}. In general, the stochastic polynomial interpolation model has achieved that the model gradient norm is $\mcO(\Delta_k+E/\Delta_k)$ accurate for any point within the trust region of size $\Delta_k$ \citep{Sara2018ASTRO}. The following Theorem gives a bound on the stochastic model error that holds for any point within the trust region. 
    
\begin{theorem} \label{thm:modelerror_stoch}
    Let Assumption \ref{assum:fn} and \ref{assum:error} hold, and $M_k(\BFx)$ be a stochastic quadratic model of $f$ on $\mcB(\BFX_k;\Delta_k)$ with diagonal Hessian built on $\Fbar\left(\BFX_k^{i},n\left(\BFX_k^{i}\right)\right)=f\left(\BFX_k^{i}\right) + \Ebar_k^{i},$ for $i=0,1,\dots,2d$ on iteration $k$. With $\TD M_k(\BFx) = \left(\BFx-\BFX_k\right)^\intercal \sfH_k + \BFG_k$ and $\kappa_{eg1}$ = $\frac{5\sqrt{2d}}{2} (\kappa_{Lg} + \kappa_{\sfH})$, we can uniformly bound the model gradient error for all points $\BFx$ in $\mcB(\BFX_k;\Delta_k)$ by
    $$\| \TD M_k(\BFx) - \TD f(\BFx) \| \le \kappa_{eg1} \Delta_k + \frac{\sqrt{\sum_{i=1}^{2d}\left(\Ebar_k^{i}-\Ebar_k^{0}\right)^2}}{\Delta_k}.$$
\end{theorem}
\begin{proof}
    With a stochastic quadratic models with diagonal Hessian, the theorem holds following the same steps of Appendix B in \cite{Sara2018ASTRO}. 
\end{proof}

We next investigate the model gradient error at the current iterate as a special case. At the center point, we can get a tighter upper bound like a central finite difference. Theorem \ref{thm:modelerror_stoch_iterate} shows the model gradient error at an  iterate $k$ is $\mcO(\Delta_k^2 + E/\Delta_k)$.
    
\begin{theorem} \label{thm:modelerror_stoch_iterate}
    Let Assumption \ref{assum:fn} and \ref{assum:error} hold, and the interpolation model $M_k(\BFx)$ of $f$ be a stochastic quadratic models with diagonal Hessian constructed using $\Fbar\left(\BFX_k^{i},n\left(\BFX_k^{i}\right)\right)=f\left(\BFX_k^{i}\right) + \Ebar_k^{i},$ for $i=0,1,\dots,2d$. Then, with $\kappa_{eg2}$ = $\frac{\sqrt{d}}{6} \kappa_{L}$, we can uniformly bound the model gradient error at the center point by
    $$\| \BFG_k - \TD f(\BFX_k) \| \le \kappa_{eg2} \Delta_k^2 + \frac{\sqrt{\sum_{i=1}^{d}\left(\Ebar_k^{i}-\Ebar_k^{i+d}\right)^2}}{2\Delta_k}.$$
\end{theorem}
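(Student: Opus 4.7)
The plan is to exploit the fact that the coordinate-basis design set, together with a diagonal Hessian model, forces $\BFG_k$ to be exactly the \emph{central} finite-difference approximation of $\nabla f(\BFX_k)$ plus an explicit noise term. The central-difference structure is what buys the $\Delta_k^2$ factor (cubic Taylor remainder divided by $\Delta_k$) instead of the $\Delta_k$ bound one gets for forward differences.

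First I would write out the interpolation equations explicitly. For each coordinate $i=1,\dots,d$, the three conditions at $\BFX_k$, $\BFX_k+\BFe^{i}\Delta_k$, and $\BFX_k-\BFe^{i}\Delta_k$ read
\begin{align*}
\beta_0 &= f(\BFX_k) + \Ebar_k^{0},\\
\beta_0 + G_{k,i}\Delta_k + \tfrac{1}{2}H_{k,i}\Delta_k^{2} &= f(\BFX_k+\BFe^{i}\Delta_k) + \Ebar_k^{i},\\
\beta_0 - G_{k,i}\Delta_k + \tfrac{1}{2}H_{k,i}\Delta_k^{2} &= f(\BFX_k-\BFe^{i}\Delta_k) + \Ebar_k^{i+d}.
\end{align*}
Subtracting the third from the second cancels both $\beta_0$ and the diagonal Hessian term, giving
$$G_{k,i} \;=\; \frac{f(\BFX_k+\BFe^{i}\Delta_k)-f(\BFX_k-\BFe^{i}\Delta_k)}{2\Delta_k} \;+\; \frac{\Ebar_k^{i}-\Ebar_k^{i+d}}{2\Delta_k}.$$

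Next I would Taylor-expand the two function values about $\BFX_k$ to third order. Because $\nabla^{2} f$ is $\kappa_L$-Lipschitz (Assumption~\ref{assum:fn}), the two quadratic Taylor remainders are each bounded by $\tfrac{\kappa_L}{6}\Delta_k^{3}$ in absolute value. The even terms (value and quadratic) cancel, the linear terms add up to $2\,\partial_i f(\BFX_k)\Delta_k$, and the cubic remainders subtract, producing
$$\left|\,G_{k,i}-\partial_i f(\BFX_k)\,\right| \;\le\; \frac{\kappa_L}{6}\Delta_k^{2} \;+\; \frac{|\Ebar_k^{i}-\Ebar_k^{i+d}|}{2\Delta_k}.$$

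Finally I would aggregate across coordinates via Minkowski's inequality in $\ell_{2}$: if $|c_i|\le a_i+b_i$ then $\sqrt{\sum_i c_i^{2}}\le \sqrt{\sum_i a_i^{2}}+\sqrt{\sum_i b_i^{2}}$. Applying this with $a_i = \tfrac{\kappa_L}{6}\Delta_k^{2}$ (constant in $i$, giving $\sqrt{d}$) and $b_i = |\Ebar_k^{i}-\Ebar_k^{i+d}|/(2\Delta_k)$ yields exactly
$$\|\BFG_k - \nabla f(\BFX_k)\| \;\le\; \frac{\sqrt{d}}{6}\kappa_L\,\Delta_k^{2} \;+\; \frac{\sqrt{\sum_{i=1}^{d}(\Ebar_k^{i}-\Ebar_k^{i+d})^{2}}}{2\Delta_k},$$
which is the claim with $\kappa_{eg2}=\tfrac{\sqrt{d}}{6}\kappa_L$.

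The only subtle point — and the only real obstacle — is making sure that the third-order Taylor remainder is controlled by the \emph{Hessian} Lipschitz constant $\kappa_L$ (not the gradient Lipschitz constant $\kappa_{Lg}$); this is what produces $\Delta_k^{2}$ rather than $\Delta_k$ and is the reason the bound improves on Theorem~\ref{thm:modelerror_stoch}. Everything else is a straightforward consequence of the cancellations built into the central-difference / diagonal-Hessian structure, so no well-poisedness arguments or matrix conditioning estimates are needed.
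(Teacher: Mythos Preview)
Your proposal is correct and follows essentially the same route as the paper: derive the closed form $G_{k,i}=\frac{f(\BFX_k+\BFe^{i}\Delta_k)-f(\BFX_k-\BFe^{i}\Delta_k)}{2\Delta_k}+\frac{\Ebar_k^{i}-\Ebar_k^{i+d}}{2\Delta_k}$ from the interpolation conditions, use the Hessian Lipschitz constant $\kappa_L$ to get the $\tfrac{\kappa_L}{6}\Delta_k^{2}$ central-difference remainder, and aggregate over coordinates. The only cosmetic difference is that the paper first splits $\BFG_k-\nabla f(\BFX_k)$ into a deterministic part $\BFg_k-\nabla f(\BFX_k)$ and a stochastic part $\BFG_k-\BFg_k$ and applies the triangle inequality to their norms, whereas you bound each coordinate and then invoke Minkowski; the resulting inequality is identical.
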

\begin{proof}
    We begin by decomposing $\BFG_k = \TD f(\BFX_k) + E^g(\BFX_k) + e^g(\BFX_k),$ 
    where $\BFE^g_k := \BFG_k - \BFg_k$ and $\BFe^g_k := \BFg_k - \TD f(\BFX_k)$ are the stochastic model error and deterministic model error, respectively and $\BFg_k$ is the deterministic model gradient. 
    Then we can obtain $\BFG_k$ by finding $\BFbeta$ such that $\widetilde{\sfM}(\Phi, \mcX_{k}) \BFbeta = \widetilde{\BFF}$, where
    \begin{equation*}
        \widetilde{\sfM}(\Phi, \mcX_{k}) = 
        \begin{bmatrix}
        1 & 0 & 0 & \cdots & 0 & 0 & 0 & \cdots & 0  \\
        1 & \Delta_k & 0 & \cdots & 0 & \Delta_k^2/2 & 0 & \cdots & 0  \\
        1 & 0 & \Delta_k & \cdots & 0 & 0 & \Delta_k^2/2 & \cdots & 0  \\
        \vdots & \vdots & \vdots & \vdots & \vdots  & \vdots& \vdots & \vdots & \vdots\\
        1 & 0 & 0 & \cdots & \Delta_k & 0 & 0 & \cdots & \Delta_k^2/2  \\
        1 & -\Delta_k & 0 & \cdots & 0 & \Delta_k^2/2 & 0 & \cdots & 0  \\
        1 & 0 & -\Delta_k & \cdots & 0 & 0 & \Delta_k^2/2 & \cdots & 0  \\
        \vdots & \vdots & \vdots & \vdots & \vdots  & \vdots& \vdots & \vdots & \vdots\\
        1 & 0 & 0 & \cdots & -\Delta_k & 0 & 0 & \cdots & \Delta_k^2/2  \\
        \end{bmatrix},
    \end{equation*} and
    \begin{equation*}
        \widetilde{\BFF} =
        \begin{bmatrix}
        \Fbar(\BFX_k^{0},N(\BFX_k^{0})) \\
        \Fbar(\BFX_k^{1},N(\BFX_k^{1})) \\
        \vdots \\
        \Fbar(\BFX_k^{2d},N(\BFX_k^{2d}))\\
        \end{bmatrix}.
    \end{equation*}
    As a result, the $i$-th element of the gradient estimate is obtained by
    \begin{equation}
    \label{eq:G-k-i}
        [\BFG_k]_i = \frac{f(\BFX_k+\BFe_i\Delta_k) - f(\BFX_k-\BFe_i\Delta_k)}{2\Delta_k} + \frac{\Ebar_k^{i}-\Ebar_k^{i+d}}{2\Delta_k}.
    \end{equation}
    Based on Taylor expansion, it is well known that
    \begin{equation*} \label{eq:deterministic_gradient_error}
        \|\BFg_k - \TD f(\BFX_k)\| \le \frac{\sqrt{d}}{6}\kappa_{L}\Delta_k^2,
    \end{equation*}
    with \eqref{eq:G-k-i} like the central finite difference \citep{berahas2021gradientest, coope2020gradient}. Hence, we obtain $[\BFE^g_k]_i = (\Ebar_k^{i}-\Ebar_k^{i+d})/{2\Delta_k}$ for any $i\in\{1,\dots,d\}$, which implies that $\|\BFE^g_k\| = \sqrt{\sum_{i=1}^d(\Ebar_k^{i}-\Ebar_k^{i+d})^2}/(2\Delta_k)$. As a result, 
    \begin{align*}
        \|\BFG_k - \TD f(\BFX_k)\| &\le \|\BFg_k - \TD f(\BFX_k)\| + \|\BFG_k - \BFg_k\| \\
        &\le \frac{\sqrt{d}}{6}\kappa_{L}\Delta_k^2 + \frac{ \sqrt{\sum_{i=1}^d(\Ebar_k^{i}-\Ebar_k^{i+d})^2}}{2\Delta_k}.
    \end{align*}
\end{proof}

\subsection{Convergence}
For the remainder of the paper, we define the set $\mcK=\{k\in\mbN:\text{iteration}\;k\; \text{is  successful}\}$. 

\begin{theorem} \label{thm:deltaconverge}
    Let Assumptions \ref{assum:fn}-\ref{assum:lambda} hold. Then, $\Delta_k \xrightarrow[]{w.p.1} 0 \text{ as } k \rightarrow \infty.$
\end{theorem}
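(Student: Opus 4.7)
The plan is to split the analysis on sample paths according to whether the set of successful iterations $\mcK$ is finite or infinite, and to conclude $\Delta_k\to 0$ in each case.

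If $|\mcK|<\infty$ on a given sample path, let $\bar{k}$ denote the largest index in $\mcK$. For every $k>\bar{k}$ the third branch of the update in Algorithm~\ref{alg:ASTRO-DF-refined} is executed, so $\Delta_{k+1}=\gamma_2\Delta_k$, and iterating gives $\Delta_k=\gamma_2^{k-\bar{k}-1}\Delta_{\bar{k}+1}\to 0$ since $\gamma_2\in(0,1)$.

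If $|\mcK|=\infty$, I aim to establish a sample-path-wise lower bound of the form $f(\BFX_k)-f(\BFX_{k+1})\geq c_0\Delta_k^2$ with some $c_0>0$ on every sufficiently large $k\in\mcK$, and then telescope. On a direct-search success we have $\Rhat_k>\theta\Delta_k^2$; writing $\Rhat_k$ as the true reduction $f(\BFX_k)-f(\BFXhat_{k+1})$ plus two averaged stochastic error terms and applying Theorem~\ref{thm:bddE} with $c$ chosen small enough, each error is bounded in magnitude by $c\Delta_k^2$ eventually a.s., giving a lower bound of $(\theta-2c)\Delta_k^2$ on the true decrease. On a subproblem success, the criticality condition $\mu\|\TD M_k(\BFX_k)\|\geq\Delta_k$, the Cauchy reduction of Assumption~\ref{assum:fcd}, and the uniform Hessian bound $\|\sfH_k\|\leq \kappa_\sfH$ combine to give
\begin{equation*}
R_k\geq \tfrac{\kappa_{fcd}}{2}\cdot\tfrac{\Delta_k}{\mu}\cdot\left(\tfrac{\Delta_k}{\mu\kappa_\sfH}\land \Delta_k\right)\geq c_1\Delta_k^2
\end{equation*}
for some $c_1>0$; the acceptance inequality $\Rtilde_k\geq \eta R_k$ and a second application of Theorem~\ref{thm:bddE} then deliver the same form of bound for $f(\BFX_k)-f(\BFXtilde_{k+1})$.

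Telescoping the decrease across $\mcK$ from a sufficiently large index $K_0$ and using that $f$ is bounded below by $f^*$ gives $c_0\sum_{k\in\mcK,\,k\geq K_0}\Delta_k^2\leq f(\BFX_{K_0})-f^*<\infty$ a.s., so $\Delta_k\to 0$ along $\mcK$. Since the radius can expand only on a successful step (by at most the factor $\gamma_1$) and otherwise strictly shrinks, each $\Delta_k$ is bounded by $\gamma_1$ times the radius at the most recent successful iteration, which transfers convergence along $\mcK$ to the full sequence. The main obstacle is turning the \emph{estimated} reduction inequalities into deterministic bounds on the \emph{true} reduction that are valid for all large $k$ almost surely; this is precisely where the adaptive-sampling rule~\eqref{eq:inner-sampling-interpolation} and the ``i.o.''-form of Theorem~\ref{thm:bddE} enter, and extra care will be needed because the errors being controlled are at several design points simultaneously rather than at the incumbent alone.
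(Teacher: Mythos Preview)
Your proposal is correct and follows essentially the same route as the paper: the finite/infinite split on $\mcK$, the $\theta\Delta_k^2$ and Cauchy-reduction lower bounds for the two success types, Theorem~\ref{thm:bddE} to control the estimation errors, and the geometric comparison with the successful subsequence to carry convergence to the full sequence. The only cosmetic difference is that you absorb the error terms into a per-iteration true-reduction bound \emph{before} telescoping, whereas the paper telescopes first and then bounds the accumulated error sum via a self-referential inequality on $\sum_k\Delta_k^2$; your ordering is slightly cleaner but not a different argument.
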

\begin{proof}
    If $\mcK$ is finite, the trust-region radius tends to zero due to infinite shrinkage within unsuccessful iterations, making the statement of the theorem hold trivially. So we consider that $\mcK$ is infinite. The candidate solution can be one of the design point among the design set or the solution from the trust-region subproblem. Let us define two different set $\mcK_{ds} := \mcK \cap \{k\in\mbN:\BFX_{k+1} = \BFXhat_{k+1}\}$ and $\mcK_{tr} := \mcK \cap \{k\in\mbN:\BFX_{k+1} = \BFXtilde_{k+1}\}$. Then we have, for any $k \in \mcK_{ds}$, 
    \begin{equation*}
        \Rhat_k=\Fbar(\BFX_{k},N_k) - \Fbar(\hat\BFX_{k+1},\hat{N}_{k+1}) 
        = M_k(\BFX_k) - M_k(\hat\BFX_{k+1})
        \ge \theta\Delta_k^2,
    \end{equation*}
    and for any $k\in\mcK_{tr}$,
    \begin{align*}
        \Rtilde_k=\Fbar(\BFX_{k},N_k) - \Fbar(\tilde\BFX_{k+1},\tilde{N}_{k+1}) & \ge \eta(M_k(\BFX_k) - M_k(\tilde\BFX_{k+1}))\\
        &\ge \frac{\eta\kappa_{fcd}}{2}\frac{\Delta_k}{\mu}\left(\frac{\Delta_k}{\mu\kappa_\sfH}\land{\Delta}_k\right)\\
        &\ge \left(\frac{\eta\kappa_{fcd}}{2\mu}\left(\frac{1}{\mu\kappa_\sfH}\land 1\right)\right)\Delta_k^2.
    \end{align*} Then, for any $k\in\mcK$,
    \begin{align*}
    \theta'\sum_{\substack{k\in\mcK }}\Delta_k^2 &\le \sum_{\substack{k\in\mcK }} (f(\BFX_{k}) - f(\BFX_{k+1}) + \Ebar_{k}-\Ebar_{k+1})\le f(\BFx_0) - f^* +\sum_{k=0}^{\infty}(|\Ebar_{k}|+|\Ebar_{k+1}|),
    \end{align*}
    where $\theta'=\left(\frac{\eta\kappa_{fcd}}{2\mu}(\frac{1}{\mu\kappa_\sfH}\land 1)\land\theta\right)$. Let $\mcK = \{k_1,k_2,\dots\}$, $k_0 = -1,$ and $\Delta_{-1}=\Delta_0/\gamma_2$. Then from the fact that $\Delta_k\le \gamma_1\gamma_2^{k-k_i-1}\Delta_{k_i}$ for $k=k_i+1,\dots,k_{i+1}$ and each $i$, we obtain 
    \begin{equation*}   
        \sum_{k=k_i+1}^{k_{i+1}}\Delta_k^2 \le \gamma_1^2 \Delta_{k_i}^2 \sum_{k=k_i+1}^{k_{i+1}} \gamma_2^{2(k-k_i-1)} \le \gamma_1^2\Delta_{k_i}^2\sum_{k=0}^{\infty}\gamma_2^{2k} = \frac{\gamma_1^2}{1-\gamma_2^2}\Delta_{k_i}^2.
    \end{equation*}
    Then, we have 
    \begin{align*}
    \sum_{k=0}^{\infty}\Delta_k^2 \le \frac{\gamma_1^2}{1-\gamma_2^2}\sum_{i=0}^{\infty}\Delta_{k_i}^2< \frac{\gamma_1^2}{1-\gamma_2^2}\left(\frac{\Delta_0^2}{\gamma_2^2}+\frac{f(\BFx_0)-f^*+E'_{0,\infty}}{\theta'}\right),
    \end{align*}
    where $E'_{i,j} = \sum_{k=i}^{j}(|\Ebar_{k}|+|\Ebar_{k+1}|)$. By Theorem \ref{thm:bddE}, there must exists a sufficiently large $K_\Delta$ such that $|\Ebar_{k}|+|\Ebar_{k+1}| < c_{\Delta}\Delta_k^2$ for any given $c_{\Delta} > 0$ and every $k \ge K_\Delta$. Then we obtain 
    \begin{equation*}
    \sum_{k=0}^{K_\Delta-1}\Delta_k^2 + \sum_{k=K_\Delta}^{\infty}\Delta_k^2 < \frac{\gamma_1^2}{1-\gamma_2^2}\left(\frac{\Delta_0^2}{\gamma_2^2}+\frac{f(\BFx_0)-f^*+E'_{0,K_\Delta-1}+E'_{K_\Delta,\infty}}{\theta'}\right),
    \end{equation*}
    and $E'_{K_\Delta,\infty}=\sum_{k=K_\Delta}^\infty|\Ebar_{k}|+|\Ebar_{k+1}| < \sum_{k=K_\Delta}^\infty c_{\Delta}\Delta_k^2.$ As a result, we get 
    \begin{align} \sum_{k=K_\Delta}^{\infty}\Delta_k^2 < \frac{\gamma_1^2}{1-\gamma_2^2}\left(\frac{\Delta_0^2}{\gamma_2^2}+\frac{f(\BFx_0)-f^*+E'_{0,K_\Delta-1}}{\theta'}\right)\left(1-\frac{\gamma_1^2}{1-\gamma_2^2}\frac{c_\Delta}{\theta'}\right)^{-1}<\infty.\label{eq:delta-upper-bound}
    \end{align}
   Therefore, $\Delta_k \xrightarrow[]{wp1} 0\text{ as } k \rightarrow \infty.$
\end{proof}

Now we prove the almost sure convergence of ASTRO-DF with coordinate direct search.
\begin{theorem} \label{thm:almostsureconvergence}
    Let Assumptions \ref{assum:fn}-\ref{assum:lambda} hold. Then, $\|\TD f(\BFX_k) \| \xrightarrow[]{w.p.1} 0 \text{ as } k \rightarrow \infty.$
\end{theorem}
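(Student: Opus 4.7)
My plan is to prove convergence in two stages: first establish $\liminf_k \|\nabla f(\BFX_k)\| = 0$ almost surely by contradiction with Theorem \ref{thm:deltaconverge}, and then upgrade to $\lim_k \|\nabla f(\BFX_k)\| = 0$ almost surely via a standard Lipschitz-gap argument. The engine in both stages is the interplay between the shrinking radius $\Delta_k$ and the improving local model quality.

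\textbf{Step 1 (model gradient control).} Combining Theorem \ref{thm:modelerror_stoch_iterate} with Theorem \ref{thm:bddE}, for any $c>0$ we get $|\Ebar_k^{i}| \leq c\Delta_k^2$ at every design point for all sufficiently large $k$, almost surely. This bounds the stochastic term of the gradient error by $\mcO(\Delta_k)$, so $\|\BFG_k - \nabla f(\BFX_k)\| = \mcO(\Delta_k) \to 0$ a.s., by Theorem \ref{thm:deltaconverge}. Hence $|\|\BFG_k\| - \|\nabla f(\BFX_k)\||\to 0$ a.s.

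\textbf{Step 2 ($\liminf = 0$).} Assume for contradiction that $\|\nabla f(\BFX_k)\| \geq \epsilon > 0$ for all large $k$ on an event of positive probability. On this event, Step 1 and Theorem \ref{thm:deltaconverge} jointly ensure that for large $k$ we have $\|\BFG_k\| \geq \epsilon/2$, the criticality condition $\mu\|\BFG_k\| \geq \Delta_k$ in the second branch of the update holds, and $\Delta_k \leq \|\BFG_k\|/\kappa_\sfH$. Assumption \ref{assum:fcd} together with Definition \ref{defn:cauchyred} then gives $R_k \geq (\kappa_{fcd}\epsilon/4)\Delta_k$. Using the probabilistically fully linear property ensured by Lemma \ref{lemma:stochastic_interp} and the remark after Theorem \ref{thm:bddE}, plus $|\Ebar_k|+|\Ebar_{k+1}|=\mcO(\Delta_k^2)$ via Theorem \ref{thm:bddE}, I obtain $|\Rtilde_k - R_k| = \mcO(\Delta_k^2)$, hence $\Rtilde_k/R_k \to 1$ and $\Rtilde_k \geq \eta R_k$ for large $k$. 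The iteration is therefore successful (first or second branch of the update), so $\Delta_{k+1} \geq \Delta_k$, contradicting $\Delta_k \to 0$ a.s.

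\textbf{Step 3 (full limit).} Assume for contradiction that $\limsup_k \|\nabla f(\BFX_k)\| \geq 2\epsilon$ on an event of positive probability. Combined with Step 2, extract indices $\ell_i < m_i$ with $\|\nabla f(\BFX_{\ell_i})\| \geq 2\epsilon$, $\|\nabla f(\BFX_{m_i})\| < \epsilon$, and $\|\nabla f(\BFX_k)\| \geq \epsilon$ for $\ell_i \leq k < m_i$. Repeating the analysis of Step 2 with $\epsilon$ in place of $2\epsilon$, every successful iteration $k$ in $[\ell_i,m_i)$ yields $\Rtilde_k \geq (\eta\kappa_{fcd}\epsilon/4)\Delta_k$ via the trust-region branch, and direct-search successes exceed this reduction by construction since the update rule requires $\Rhat_k > \Rtilde_k$. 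Subtracting $|\Ebar_k| + |\Ebar_{k+1}| = \mcO(\Delta_k^2)$ yields $f(\BFX_k) - f(\BFX_{k+1}) \geq C\epsilon\Delta_k$ for large $k$ and some constant $C>0$. Telescoping,
\[
\|\BFX_{m_i} - \BFX_{\ell_i}\| \leq \sum_{k\in\mcK\cap[\ell_i,m_i)}\Delta_k \leq \frac{1}{C\epsilon}\bigl(f(\BFX_{\ell_i}) - f(\BFX_{m_i})\bigr).
\]
Since $f\geq f^*$, the $f(\BFX_k)$ sequence is a.s. Cauchy, forcing $\|\BFX_{m_i}-\BFX_{\ell_i}\|\to 0$; Lipschitz continuity of $\nabla f$ (Assumption \ref{assum:fn}) then forces $|\|\nabla f(\BFX_{\ell_i})\| - \|\nabla f(\BFX_{m_i})\||\to 0$, contradicting the $\epsilon$-gap.

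\textbf{Main obstacle.} The subtlest part is Step 2's reduction accounting in the presence of the two update branches: direct search alone guarantees only $\mcO(\Delta_k^2)$ reduction, which is insufficient to drive the gradient convergence; however, once $\|\BFG_k\|$ is bounded below and the fully linear regime is active, the trust-region branch already produces $\mcO(\Delta_k)$ reduction in $\Rtilde_k$, and a direct-search success is only declared when $\Rhat_k$ exceeds $\Rtilde_k$, so both branches deliver at least the required $\Omega(\Delta_k)$ reduction. A secondary technical point is orchestrating all the ``for sufficiently large $k$'' statements (Theorems \ref{thm:bddE} and \ref{thm:deltaconverge}, the criticality threshold, the fully-linear regime) on a single a.s. event, but each is obtained on a probability-one set so the intersection remains probability one.
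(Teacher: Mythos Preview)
Your proposal is correct and follows the same three-stage architecture as the paper---model-gradient control, $\liminf=0$ via an eventually-successful contradiction against $\Delta_k\to 0$, then the Lipschitz-gap upgrade to the full limit---with the paper simply deferring each stage to Lemmas 5.2, 5.4 and Theorem 5.5 of \cite{Sara2018ASTRO} and noting only that the new direct-search branch satisfies $\Rhat_k\geq\theta\Delta_k^2$, whereas you spell the arguments out and exploit the sharper $\Rhat_k>\Rtilde_k$ to get $\Omega(\Delta_k)$ reduction in Step 3. The one loose thread is the claim ``$f\geq f^*$ so $\{f(\BFX_k)\}$ is Cauchy'': boundedness below alone does not give convergence, but taking $c$ small enough in Theorem \ref{thm:bddE} makes every sufficiently late successful step satisfy $f(\BFX_k)-f(\BFX_{k+1})\geq(\theta'-2c)\Delta_k^2>0$ (with $\theta'$ as in the proof of Theorem \ref{thm:deltaconverge}), so $\{f(\BFX_k)\}$ is eventually nonincreasing and hence convergent.
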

\begin{proof}

    Let us define the set 
    \begin{equation*}
        \mcV := \left\{ \exists \,\text{inf}. \, \text{subseq}.\, \{k_j\} \, \text{ s.t. } \, \left( \frac{\Delta_{k_j}}{\|\BFG_{k_j}\|}\le \min\left\{\frac{\eta'}{2\kappa_{ef}},\mu\right\}\right) \bigcap \left(\rhohat_k<\eta\right)\right\},
    \end{equation*}
     where $\eta'= 6^{-1}(1-\eta)\kappa_{fcd}((\mu\kappa_{\sfH})^{-1}\land 1)$. Then, we have $\mbP\{\mcV\}=0.$ The proof trivially follows from  Lemma 5.2 in \cite{Sara2018ASTRO} by considering that $\mu \|\BFG_{k_j}\| \ge \Delta_{k_j}$ is now in the set $\mcV$, which was ensured by the criticality step in \cite{Sara2018ASTRO}. We also have $\| \BFG_k - \TD f(\BFX_k) \| \xrightarrow[]{wp1}0 \text{ as } k\rightarrow\infty.$ The proof follows from that of Lemma 5.4 by \cite{Sara2018ASTRO}, considering that we always use $\Delta_k$ for iteration $k$. Now we will prove that Algorithm \ref{alg:main-stoch} obtains $\liminf \|\TD f(\BFX_k) \| \xrightarrow[]{wp1} 0 \text{ as } k \rightarrow \infty$ with $\| \BFG_k - \TD f(\BFX_k) \| \xrightarrow[]{wp1}0 \text{ as } k\rightarrow\infty$ and Theorem \ref{thm:deltaconverge}.
     For the purpose of arriving at a contradiction, suppose the set $\mcD_g=\{\omega: \exists\, \kappa_{lbg}(\omega),k_g(\omega)>0 \text{ s.t. } \| \BFG_k\| \ge \kappa_{lbg}(\omega) \, \forall k > k_g(\omega) \}$,
    has positive measure. Due to the assumptions of the theorem, we can find a set $\mcD_d$ of sample paths such that $\mbP\{\mcD_d\}=1$, and such that for each $\omega \in \mcD_d$, $\Delta_k(\omega)\rightarrow 0$ and $\omega \in \mcV^c$.
    Let $\omega \in \mcD_g \cap \mcD_d$. Then either $\| \BFG_k(\omega)\| < \min\{\eta'(2\kappa_{ef})^{-1},\mu\}^{-1}\Delta_k(\omega)$ or $\rhohat_k(\omega)>\eta$ for large enough $k$. Since $\Delta_k$ goes to zero almost surely, $\| \BFG_k(\omega)\| < \min\{\eta'(2\kappa_{ef})^{-1},\mu\}^{-1}\Delta_k(\omega)$ cannot be true for large enough $k$. Therefore, for any $\omega \in \mcD_g \cap \mcD_d$, it must be true that $\rhohat_k(\omega) \ge \eta$ for large enough $k$. In other words, the iterations in sample path $\omega \in \mcD_g \cap \mcD_d$ are eventually successful. 

    Now let $K_s(\omega)>0$ be such that $K_s(\omega)-1$ is the last unsuccessful iteration in sample-path $\omega \in \mcD_g \cap \mcD_d$, that is, $k$ is a successful iteration if $k\ge K_s(\omega)$. Next, $\Delta_k \ge \Delta_{\max\{K_g(\omega), K_s(\omega)\}}$ contradicts the observation $\Delta_k(\omega) \rightarrow 0$. We conclude that $\mbP\{\mcD_g\}=0$ and that $\liminf_{k\rightarrow\infty} \| \BFG_k \| = 0$ almost surely. This along with the fact that $\| \BFG_k - \TD f(\BFX_k) \| \xrightarrow[]{wp1}0 \text{ as } k\rightarrow\infty$, implies $\liminf_{k\rightarrow\infty} \| \TD f(\BFX_k) \| = 0$ almost surely. Then, the almost sure convergence of Algorithm \ref{alg:ASTRO-DF-refined} follows from $\liminf_{k\rightarrow\infty} \| \BFG_k \| = 0$ almost surely, and Theorem \ref{thm:deltaconverge}. The proof is completed by trivially following steps in Theorem 5.5 in \cite{Sara2018ASTRO} and considering that $\Rhat_k
    \ge \theta\Delta_k^2$.
\end{proof}

Now we prove that ASTRO-DF with coordinate direct search has a higher probability of success compared to the original ASTRO-DF. Although this result is not utilized in our current complexity results, it demonstrates that the progress achieved by integrating a coordinate direct search is at least as good as the progress made without that feature, thereby supporting its positive impact on the finite-time performance of the algorithm.
\begin{theorem}
\label{thm:increaseing-prob}
    Let Assumptions \ref{assum:fn}-\ref{assum:lambda} hold. Then given $\BFX_k \in \mbR^d$, the probability of having a successful iteration $k$ for Algorithm \ref{alg:ASTRO-DF-refined} is greater than equal to the probability of having a successful iteration $k$ for the original ASTRO-DF.
\end{theorem}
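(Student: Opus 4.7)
The plan is to fix an arbitrary iteration $k$, condition on $\mcF_{k-1}$ (so that $\BFX_k$ and $\Delta_k$ are fixed), and show directly that the event ``iteration $k$ is successful in the refined ASTRO-DF'' contains the event ``iteration $k$ is successful in the original ASTRO-DF'' pointwise on the common probability space. Monotonicity of probability then gives the stated inequality. Because both algorithms generate design points and estimates from the same Monte Carlo oracle under Assumption~\ref{assum:error}, we may couple the two algorithms so that they produce identical local models $M_k$, identical sample averages $\Fbar(\cdot,N(\cdot))$, and hence identical values of $R_k$, $\Rtilde_k$, and $\Rhat_k$ at iteration $k$.

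First I will read off the success event in the refined algorithm from Step~5 of Algorithm~\ref{alg:ASTRO-DF-refined}. Let $\mcC_1:=\{\Rhat_k>\max\{\Rtilde_k,\theta\Delta_k^2\}\}$ correspond to a direct-search success and $\mcC_2:=\{\Rtilde_k\geq \eta R_k\}\cap\{\mu\|\TD M_k(\BFX_k)\|\geq \Delta_k\}$ correspond to a trust-region success. Because the two branches are the only ways in which $\Delta_{k+1}\geq \Delta_k$ and the incumbent may be updated, the refined success event is $\mcS^{\text{ref}}_k=\mcC_1\cup\mcC_2$.

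Next I will characterize the success event for the original ASTRO-DF. In that version the criticality step shrinks $\Delta_k$ whenever $\mu\|\TD M_k(\BFX_k)\|<\Delta_k$, and such a contraction is itself counted as an unsuccessful iteration, while the non-critical case is followed by the acceptance check $\rhohat_k=\Rtilde_k/R_k\geq\eta$. Consequently, under the common coupling, the event that iteration $k$ is successful in the original ASTRO-DF is exactly $\mcS^{\text{orig}}_k=\{\Rtilde_k\geq \eta R_k\}\cap\{\mu\|\TD M_k(\BFX_k)\|\geq\Delta_k\}=\mcC_2$.

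Combining these two characterizations yields $\mcS^{\text{orig}}_k=\mcC_2\subseteq \mcC_1\cup\mcC_2=\mcS^{\text{ref}}_k$, and therefore $\mbP\{\mcS^{\text{orig}}_k\mid \mcF_{k-1}\}\leq \mbP\{\mcS^{\text{ref}}_k\mid\mcF_{k-1}\}$ pathwise, from which the unconditional statement follows by taking expectations. The only real obstacle is the bookkeeping around the criticality step: one must argue that folding the criticality condition into $\mcC_2$ (as the refined algorithm does) does not manufacture any new unsuccessful iterations relative to the original, which is clear because a violation of $\mu\|\TD M_k(\BFX_k)\|\geq \Delta_k$ in the original already caused a contraction of $\Delta_k$ without incumbent improvement, i.e., an unsuccessful outcome. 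Everything else reduces to the elementary set inclusion above.
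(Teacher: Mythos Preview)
Your proof is correct and follows essentially the same idea as the paper's: both arguments identify the refined success event as the union of the direct-search success event and the trust-region success event, identify the original success event with the latter alone, and conclude via the inclusion. You phrase this as a pointwise set inclusion $\mcS^{\text{orig}}_k=\mcC_2\subseteq\mcC_1\cup\mcC_2=\mcS^{\text{ref}}_k$ on a coupled probability space, whereas the paper writes it through a conditional-probability decomposition $p_o=\mbP\{\mcO\mid\mcR\}\mbP\{\mcR\}+\mbP\{\mcO\mid\mcR^\complement\}\mbP\{\mcR^\complement\}\le\mbP\{\mcR\}+\mbP\{\mcO\mid\mcR^\complement\}\mbP\{\mcR^\complement\}=p_r$, but these are the same argument in different clothing.
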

\begin{proof}
    Let us define two events 
    \begin{align*}
        \mcR:=\{\omega\in\Omega:k(\omega) \text{ is successful with $\BFXhat_{k+1}(\omega)$}|\BFX_k(\omega)=\BFx_k\},\\\mcO:=\{\omega\in\Omega:k(\omega) \text{ is successful with $\BFXtilde_{k+1}(\omega)$}|\BFX_k(\omega)=\BFx_k\}.
    \end{align*} Then, we have $p_o = \mbP\{\mcO\} = \mbP\{\mcO|\mcR\}\mbP\{\mcR\}+\mbP\{\mcO|\mcR^\complement\}\mbP\{\mcR^\complement\},$ where $p_o$ is the probability of having successful iteration $k$ for the ASTRO-DF. In contrast, Algorithm \ref{alg:ASTRO-DF-refined} sequentially check $\BFXhat_{k+1}(\omega)$ and $\BFXtilde_{k+1}(\omega)$. If the iteration $k$ is unsuccessful with $\BFXhat_{k+1}(\omega)$, we will check $\BFXtilde_{k+1}(\omega)$ again. Hence, we have $p_r = \mbP\{\mcR\} + \mbP\{\mcO|\mcR^\complement\},$ where $p_r$ is the probability of having successful iteration $k$ for ASTRO-DF with coordinate direct search. As a result, we have $p_r \ge p_o$.
\end{proof}

\begin{remark}
    Since the sequence $\{\BFX_k\}$ is dependent on previous steps, we cannot directly compare ASTRO-DF with its refined version using Theorem \ref{thm:increaseing-prob}. Nonetheless, Theorem \ref{thm:increaseing-prob} implies that, given an incumbent $\BFX_k$, the refined ASTRO-DF has a higher likelihood of achieving success, preventing $\Delta_k$ from becoming too small too quickly. This enables the algorithm to save significant budget due to $N_k = O(\Delta_k^{-4})$.
\end{remark}

\subsection{Complexity}
While the iteration complexity and work complexity of the original ASTRO-DF has been extensively studied in \cite{ha2023siamopt}, our focus in this section is on the refinements and their impact on the complexity analysis. Specifically, we will examine how these refinements affect the algorithm's computational efficiency.

The following Lemma proves that with too small $\Delta_k$ the iteration $k$ becomes successful almost surely for sufficiently large $k < T_\epsilon$. In other words, the trust region size has a lower bound before $T_\epsilon$, which we can use later to discuss the complexity.

\begin{lemma}\label{thm:delta_epsilon}
    Let Assumptions \ref{assum:fn}-\ref{assum:lambda} hold and $\epsilon>0$ be given. Then there exists $c_{lb}>0$ where   
    \begin{equation*}
        \mbP\left\{\Delta_k < c_{lb}\epsilon \text{ for large }k<T_\epsilon \Rightarrow k \in \mcK \right\} =1.
    \end{equation*}
\end{lemma}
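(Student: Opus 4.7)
The plan is to force iteration $k$ into the second (trust-region) branch of the update step in Algorithm~\ref{alg:ASTRO-DF-refined}, by choosing $c_{lb}$ small enough that both required conditions, $\mu\|\TD M_k(\BFX_k)\|\ge \Delta_k$ and $\Rtilde_k\ge \eta R_k$, hold almost surely whenever $\Delta_k<c_{lb}\epsilon$ and $k<T_\epsilon$. All constants in $c_{lb}$ will be built from the model-quality, Cauchy-reduction, and estimation-error results already stated in the paper.

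First I would lower-bound $\|\BFG_k\|$. Because $k<T_\epsilon$ we have $\|\TD f(\BFX_k)\|>\epsilon$. Theorem~\ref{thm:modelerror_stoch_iterate} yields
$$\|\BFG_k-\TD f(\BFX_k)\|\le \kappa_{eg2}\Delta_k^{2}+\frac{1}{2\Delta_k}\sqrt{\sum_{i=1}^{d}(\Ebar_k^{i}-\Ebar_k^{i+d})^{2}},$$
and Theorem~\ref{thm:bddE} (applied to each of the $2d$ design points with any chosen constant $c>0$) makes every $|\Ebar_k^{i}|\le c\Delta_k^{2}$ eventually, with probability one. Thus the stochastic term is $\mathcal{O}(\Delta_k)$ almost surely, and $\|\BFG_k\|\ge \|\TD f(\BFX_k)\|-\mathcal{O}(\Delta_k)-\mathcal{O}(\Delta_k^{2})>\epsilon/2$ as soon as $\Delta_k<c_{lb}\epsilon$ with $c_{lb}$ small enough. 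Further requiring $c_{lb}\le \mu/2$ then gives $\mu\|\BFG_k\|\ge \mu\epsilon/2\ge\Delta_k$, securing the criticality-type condition.

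Next I would control $|R_k-\Rtilde_k|$ relative to $R_k$. The fully linear property (the remark after Lemma~\ref{lemma:stochastic_interp}) together with Theorem~\ref{thm:bddE} gives $|M_k(\BFx)-\Fbar(\BFx,n)|\le(\kappa_{ef}+c)\Delta_k^{2}$ almost surely for every point queried in iteration $k$, so by triangle inequality $|R_k-\Rtilde_k|\le 2(\kappa_{ef}+c)\Delta_k^{2}$. On the other hand, Assumption~\ref{assum:fcd} and the fact that $\|\BFG_k\|/\kappa_{\sfH}\ge \Delta_k$ (automatic once $c_{lb}\le 1/(2\kappa_{\sfH})$, since $\|\BFG_k\|\ge\epsilon/2$) give
$$R_k=M_k(\BFX_k)-M_k(\BFXtilde_{k+1})\ge \tfrac{\kappa_{fcd}}{2}\|\BFG_k\|\Delta_k\ge \tfrac{\kappa_{fcd}}{4}\epsilon\,\Delta_k.$$
Hence $(R_k-\Rtilde_k)/R_k=\mathcal{O}(\Delta_k/\epsilon)$, which is less than $1-\eta$ provided $c_{lb}$ is small enough; therefore $\Rtilde_k\ge \eta R_k$.

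I would finish by setting $c_{lb}$ as the minimum of the finitely many thresholds produced in the two steps above, so that both conditions of the second branch of the Update step are simultaneously met and iteration $k$ lies in $\mcK$. The main obstacle is the bookkeeping: the stochastic-error bound from Theorem~\ref{thm:bddE} is only an almost-sure ``eventually'' statement, while the implication in the lemma must hold for every $k<T_\epsilon$ with $\Delta_k<c_{lb}\epsilon$. The delicate point is to observe that this is exactly the form of Theorem~\ref{thm:bddE}: on a sample path of measure one, the exceptional small-$k$ window is irrelevant because $\Delta_k<c_{lb}\epsilon$ forces $k$ to be beyond that window (or the antecedent fails), so the implication holds with probability one, as claimed.
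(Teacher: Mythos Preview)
Your approach is essentially the paper's: establish the criticality condition $\mu\|\BFG_k\|\ge\Delta_k$ via Theorem~\ref{thm:modelerror_stoch_iterate} and Theorem~\ref{thm:bddE}, then establish $\Rtilde_k\ge\eta R_k$ by bounding the numerator $|\Fbar(\BFXtilde_{k+1})-M_k(\BFXtilde_{k+1})|$ by $\mcO(\Delta_k^2)$ and the denominator $R_k$ from below via Cauchy reduction. The only bookkeeping difference is that the paper lower-bounds $R_k$ by a constant times $\Delta_k^2$ (using $\|\BFG_k\|\ge\Delta_k/\mu$ rather than your $\|\BFG_k\|\ge\epsilon/2$), and picks the error constant $c_E$ \emph{a priori} so that the ratio equals $1-\eta$ exactly; this makes $c_{lb}$ depend only on the criticality step, whereas you shrink $c_{lb}$ a second time to handle the $\mcO(\Delta_k/\epsilon)$ ratio. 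Both routes are valid.

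Your final paragraph, however, contains a genuine gap. The assertion that ``$\Delta_k<c_{lb}\epsilon$ forces $k$ to be beyond that window'' is not true: nothing prevents $\Delta_k$ from becoming small at an early iteration (a run of unsuccessful iterations shrinks $\Delta_k$ geometrically), so the antecedent can hold for $k<K_E(\omega)$ where Theorem~\ref{thm:bddE} gives no control. The paper's proof does \emph{not} resolve this inside the lemma either; it proves the implication only for $k\ge K_E(\omega)$ and then, in the proof of Theorem~\ref{thm:almostsureiterationcomplexity}, handles the finitely many iterations $k<K_E(\omega)$ separately by shrinking $c_{lb}$ further on each sample path so that $\Delta_k\ge\gamma_2 c_{lb}\epsilon$ holds there as well. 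So your instinct that something delicate remains is correct, but the fix is to carry the random threshold $K_E(\omega)$ explicitly (as the paper does downstream), not to argue that small $\Delta_k$ implies large $k$.
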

\begin{proof}
    Let $\omega$ be fixed. Define 
    \begin{equation}
    c_E=\frac{1}{2d+2}\left(\frac{\kappa_{fcd}(1-\eta)}{2\mu}\left(\frac{1}{\mu\kappa_\sfH}\land 1\right)-\kappa_{ef}\right).\label{eq:c-E}
    \end{equation} We can find $K_E(\omega)>0$ such that $k\geq K_E(\omega)$ implies $|\Ebar_k(\omega)|\leq c_E\Delta_k^2(\omega)$, which also holds for all solutions visited during iteration $k$ by Theorem \ref{thm:bddE}. Then Theorem \ref{thm:modelerror_stoch_iterate} states for $k\geq K_E(\omega)$,    
    \begin{align*}
        \|\BFG_k(\omega)-\TD f(\BFX_k(\omega))\| & \leq \frac{\sqrt{d}}{6}\kappa_L\Delta_k^2(\omega)+\frac{\sqrt{d}}{\sqrt{2}}c_E\Delta_k(\omega).
    \end{align*} Let $c_{lb}$ be such that $$\frac{1}{c_{lb}}>\frac{1}{\mu}+\frac{\sqrt{d}}{6}\kappa_L\Delta_{\max}+\frac{\sqrt{d}}{\sqrt{2}}c_E.$$ Then, if $\Delta_k(\omega)<c_{lb}\epsilon$ for $k<T_\epsilon(\omega)$, we get 
    \begin{align*}
        \|\BFG_{k}(\omega)\| & \geq \|\TD f(\BFX_{k}(\omega))\|-\|\BFG_{k} (\omega)-\TD f(\BFX_{k}(\omega))\|\\
        & > \left(\frac{1}{c_{lb}}-\frac{\sqrt{d}}{6}\kappa_L\Delta_{\max}+\frac{\sqrt{d}}{\sqrt{2}}c_E\right) \Delta_k(\omega)> \frac{1}{\mu}\Delta_k(\omega),
    \end{align*}
    where we have used $\|\TD f(\BFX_k(\omega))\|>\epsilon$ since $k<T_\epsilon(\omega)$.

    This result confirms that the model quality is eventually good whenever the trust-region becomes too small. To complete the proof we need to show the model will lead to success; a sufficient condition for that is $\Rtilde_k(\omega)>\eta R_k(\omega)$. For ease of exposition we drop $\omega$ in the final step of the proof:
    \begin{align}
        \left|1-\frac{\Rtilde_k}{R_k}\right|&=\left|\frac{\Fbar(\BFXtilde_{k+1},\Ntilde_{k+1})-M_k(\BFXtilde_{k+1})}{M_k(\BFX_k)-M_k(\BFXtilde_{k+1})}\right|\nonumber\\
        &\leq\frac{|\Ebar(\BFXtilde_{k+1})|+|f(\BFXtilde_{k+1})-m_k(\BFXtilde_{k+1})|+|m_k(\BFXtilde_{k+1})-M_k(\BFXtilde_{k+1})|}{\frac{\kappa_{fcd}}{2\mu}(\frac{1}{\mu\kappa_\mcH}\land 1)\Delta_k^2}\nonumber\\
        &\leq\frac{c_E+\kappa_{ef}+(2d+1) c_E}{\frac{\kappa_{fcd}}{2\mu}(\frac{1}{\mu\kappa_\mcH}\land 1)}=1-\eta.\label{eq:success-ratio}
    \end{align}
\end{proof}

\begin{remark} \label{remark:delta_epsilon}
In the proof of Lemma~\ref{thm:delta_epsilon}, the trust region lower bound constant $c_{lb}$ depends on the coefficient of the tolerable estimation error $c_E$, specified in~\eqref{eq:c-E}. On the other hand, the statement of the Lemma does not convey that $\Delta_k<c_{lb}\epsilon$ must occur before $T_\epsilon$. But if it does, it leads to the expansion of the trust region whenever the trust region becomes small enough. So while $\epsilon$ can be chosen to be large such that $\Delta_k$ will not become too small to reach it, it does not cause any problem in the future complexity results. The refined ASTRO-DF leads to a larger $c_E$ since in~\eqref{eq:success-ratio} and \eqref{eq:c-E}, we now have $2d$ instead of $(d+1)(d+2)/2-1$ points. This means that compared to the original ASTRO-DF where $c_E=\mcO(d^{-2})$ and $c_{lb}=\mcO(d)$, this refined version will have $c_E=\mcO(d^{-1})$ and  $c_{lb}=\mcO(\sqrt{d})$. Given that $\kappa_{ef}$ and $\kappa_\sfH=\mcO(d\log d)$ are in our control (Appendix \ref{app:kappa-ef} and \ref{app:kappa-hessian}), we can choose $\mu$ such that $c_E>0$, i.e., $\mu^{-1}>(\sqrt{2\kappa_\sfH\kappa_{ef}}\land2\kappa_{ef})$. 
Nevertheless, the iteration complexity will be discussed for small enough $\epsilon$.
\end{remark} 

Relying on Lemma \ref{thm:delta_epsilon}, we now show the almost sure iteration complexity that is stronger than the claim that the random variable $\epsilon^2 T_\epsilon$ is tight or $\mcO_p(1)$.

\begin{theorem} \label{thm:almostsureiterationcomplexity}
    Let Assumptions \ref{assum:fn}-\ref{assum:lambda} hold. Then given $\epsilon>0$, $T_\epsilon\lesssim c_T\epsilon^{-2}$ almost surely 
    for some fixed $c_T > 0$. In particular, $\mbP\{\epsilon^2 T_\epsilon \leq c_T + \epsilon^2 K\}=1$ for $K$ being a positive integer valued random variable.
\end{theorem}
\begin{proof}
    Let $f^* := \min_{\BFx \in \mbR^d} f(\BFx) > -\infty$ be the optimal function value and $K_E(\omega)$ be the one defined in Lemma~\ref{thm:delta_epsilon}. We can find from Theorem \ref{thm:almostsureconvergence} $K_h(\omega)$ such that for all $k \ge K_h(\omega)$ implies that $f(\BFX_{k}(\omega))< f^*_{st}+1$, where $f^*_{st}$ is the highest function value among the stationary points. Without loss of generality, given $\omega\in\Omega$, let $\epsilon_0(\omega)\in(0,1)$ be small enough such that except for a set of probability 0, the set $\mcK_{lb}(\omega)=\{K'_E(\omega)\leq k<T_\epsilon(\omega): \Delta_k(\omega)<c_{lb}\epsilon\}$ is nonempty for all $\epsilon\leq\epsilon_0(\omega)$, where $c_{lb}$ is the one defined in Lemma~\ref{thm:delta_epsilon} and $K'_E(\omega):=\max\{K_h(\omega),K_E(\omega)\}$ . This implies that $\Delta_k(\omega)\geq\gamma_2c_{lb}\epsilon$ for all $K'_E(\omega) \le k < T_\epsilon(\omega)$. For the remainder of the proof we will use the notation $f_k(\omega):=f(\BFX_k(\omega))$ for simplicity. Following the steps in the proof of Theorem \ref{thm:deltaconverge} we have
    \begin{align*}
        \theta'\sum_{\substack{k\in\mcK \\ k\ge K'_E(\omega) }}\Delta_k^2(\omega) 
        &\le f_{K'_E(\omega)}(\omega) - f^* +\sum_{k=K'_E(\omega)}^{\infty}(|\Ebar_{k}(\omega)|+|\Ebar_{k+1}(\omega)|),
    \end{align*}
    from which we obtain 
    \begin{align*}
    \sum_{k=K'_E(\omega)}^{\infty}\Delta_k^2(\omega) \le \frac{\gamma_1^2}{1-\gamma_2^2}\sum_{i=0}^{\infty}\Delta_{k_i}^2(\omega)< \frac{\gamma_1^2}{1-\gamma_2^2}\left(\Delta_{\max}^2+\frac{f_{K'_E(\omega)}(\omega)-f^*+E'_{K'_E(\omega),\infty}(\omega)}{\theta'}\right),
    \end{align*}
    where $E'_{i,j}(\omega) = \sum_{k=i}^{j}(|\Ebar_{k}(\omega)|+|\Ebar_{k+1}(\omega)|)$ and $k_i$ is the $i$-th successful iteration after $K'_E(\omega)$.
    Then, by the definition of $K'_E(\omega)$, we have $|\Ebar_k(\omega)|\leq c_E\Delta_k^2(\omega)$ for any $k \ge K'_E(\omega)$, which implies $E'_{K'_E(\omega),\infty}(\omega) \le 2c_E\sum_{k=K'_E(\omega)}^{\infty}\Delta_k^2(\omega)$. As a result, we obtain with small enough $c_E$,
    \begin{equation}
    \label{eq:delta-finite}
        \left(1-\frac{2\gamma_1^2 c_E}{(1-\gamma_2^2)\theta'}\right)\sum_{k=K'_E(\omega)}^{\infty}\Delta_k^2(\omega) < \frac{\gamma_1^2}{1-\gamma_2^2}\left(\Delta_{\max}^2+\frac{f^*_{st}-f^*+1}{\theta'}\right).
    \end{equation}
    Now we can write $\sum_{k=K'_E(\omega)}^{\infty}\Delta_k^2(\omega)>\sum_{k=K'_E(\omega)}^{T_\epsilon(\omega)-1}\Delta_k^2(\omega)>\gamma_2^2c_{lb}^2\epsilon^2(T_\epsilon(\omega)-K'_E(\omega)).$
    Then by \eqref{eq:delta-finite}, we can have $\epsilon^2T_\epsilon(\omega)<c_T + \epsilon^2K'_E(\omega)$ for all $\epsilon\leq\epsilon_0(\omega)$.
\end{proof} Note that Theorem~\ref{thm:almostsureiterationcomplexity} implies that $\limsup_{\epsilon\to0}\epsilon^2T_\epsilon(\omega)\leq c_T$ for all $\omega\in\Omega$, that is, it is bounded by a fixed value in the limit, almost surely. 

\begin{remark}
    A closer look in the proof of Theorem~\ref{thm:almostsureiterationcomplexity} suggests that Algorithm \ref{alg:ASTRO-DF-refined} has better iteration complexity than the original ASTRO-DF in the constant terms. This is because (dropping $\omega$ for ease of exposition) we have $\Delta_k \geq \gamma_2 c_{lb}\epsilon$ for all $K_E\le k<T_\epsilon$. If for all $k<K_E$, $\exists c'_{lb}>0$ such that $\Delta_k\geq\gamma_2 c'_{lb}\epsilon$, then $\sum_{k=0}^{T_\epsilon-1}{\Delta_k}^2 > \gamma_2^2 {c_{lb}}^2 \epsilon^2 (T_\epsilon - K_E)+{\gamma_2}^2 {c'_{lb}}^2 K_E \epsilon^2$. A larger value of $c'_{lb}$ leads to a larger lower bound on the trust-region size, resulting in a smaller sample size $(N_k)$ and a larger step size. This explains why the probability of having a successful iteration $k$ is paramount. The two refinements play a crucial role in increasing $c_{lb}$, as stated in Remark \ref{remark:delta_epsilon} and Theorem \ref{thm:increaseing-prob}. 
    Additionally, under certain regularity conditions of random variable $K_E$ such as finite first moment, Theorem~\ref{thm:almostsureiterationcomplexity} becomes equivalent to the $L_1$ results or $\mbE[T_\epsilon]=\mcO(\epsilon^{-2})$ that is similar to what \cite{blanchet2019convergence} achieve. However, we attain this canonical rate without assumptions of probabilitally fully linear models, independence, or using renewal theory.
\end{remark}

\section{Numerical Results}
\label{sec:numerical}
In this section, we evaluate and compare simulation optimization solvers on problems from the SimOpt library \citep{eckman2022simopt}. 

The SimOpt library includes the MRG32k3a~\citep{LEcuyer2002} pseudorandom-number generator and common random numbers for all solvers to manage uncertainties during search and evaluation and enable efficient comparisons. The SimOpt solver library includes various solvers like Nelder-Mead, Random Search, ALOE \citep{jin2021probability}, ADAM \citep{kingma2017adam}, STRONG \citep{Chang2013STRONG}, and STORM \citep{chen2018storm}. The SimOpt problem library consists of optimization problems where the simulation oracle provides the objective function value at specific points. Due to limited information on the objective function's structure, stochastic simulation oracles are preferred over deterministic problems with \emph{added} stochastic noise, as the latter leads to artificial solution-dependent estimators. The evaluation of solvers in the SimOpt library involves two main procedures. First, we run $m$ macroreplications for each solver and problem. The solver aims to solve the problem during each macroreplication until a pre-defined budget is exhausted. At each solution, $\BFx$, the objective function is estimated by conducting $n$ replications using sample average approximation, which varies depending on the solver used (adaptive solvers use a random sample size $N(\BFx)$). Second, we conduct $\ell$ post-replications at the intermediate solutions of each macroreplication to estimate the objective function without optimization bias. In our experiments, we test the performance of the solvers using $m=20$ macroreplications and $\ell=200$ post-replications.

We use the following standard parameters: $\mu = 1000, \eta = 0.5, \gamma_1 = 0.75,$ and $\gamma_2 = 1.5$. To determine $\Delta_{\max}$ for each macroreplication, we employ a process that generates random solutions for the problem of interest, and the maximum distance between them is calculated and set as $\Delta_{\max}$. For each sample path, we tune $\Delta_0$ by a pilot run as one of three possibilities, $0.05\Delta_{\max}\times(0.1,1,10)$ using $1\%$ of the total budget for each. We also tune the value of the scaling parameter $\kappa$ at the first iteration for each sample path by setting $\kappa=\Fbar(\BFX_0,N_0)/\Delta_0^2$. Hence, $\kappa$ also has three possibilities based on $\Delta_0$. This tuning approach enables us to adjust the scaling of $\Delta_0, \Delta_{\max},$ and $\kappa$ in response to the behavior of the optimization algorithm. The original ASTRO-DF algorithm utilized local models with linear interpolation and implemented a strategy of reusing design points from previous iterations by following the \textit{AffPoints} algorithm presented in \cite{wild2008orbit}, enabling the reuse of design points as extensively as possible. For details of the algorithms compared, see \cite{simoptgithub}.

As presented in Section~\ref{sec:contribution}, Figure \ref{fig:entire} displays the solvability profiles for 60 problems from the SimOpt library. A solvability profile of a solver depicts the proportion of tested problems solved within a certain relative optimality gap. The refined ASTRO-DF solver solved more than 80\% of the problems within 30\% of the budget, significantly outperforming the contenders. Next, we will examine each refinement and its corresponding effect. 

\paragraph{Effect of Diagonal Hessians}
\cite{ha2021improved} compare (i) ASTRO-DF with full Hessian, (ii) ASTRO-DF with diagonal Hessian following Definition \ref{defn:diagHess}, and (iii) ASTRO-DF that integrates both linear and fully quadratic models through a heuristic approach that utilizes linear models when far from first-order optimality and quadratic models otherwise. Experimenting with these three versions on three problems from the SimOpt library indicated that the diagonal Hessian version was capable of the fastest progress with robustness (lower variance). In the remainder of this section, we include a 20-dimensional problem to further investigate the algorithm behavior in higher dimensions.

\paragraph{Effect of Direct Search}
We investigate the effect of using a direct search within ASTRO-DF after having implemented the first refinement that yields diagonal Hessian quadratic models using coordinate basis placements of interpolation points. In addition to a broad comparison of the new solver on 60 problems as illustrated in Figure~\ref{fig:entire}, we conduct experiments for two problems, namely, the Stochastic Activity Network (SAN) problem, which is a convex 13-dimensional problem, and a 20-dimensional Rosenbrock function with multiplicative noise, $F(\BFx,\xi) = \sum_{i=1}^{19}\left[100\left(x_{i+1} - \xi_i x_i^2\right)^2+(\xi_i x_i-1)^2\right]$, 
where $\xi_i~\mcN(1,0.1)$ for all $i \in \{1,\dots, 19\}$ \citep{kim2010rsbr}. The reason why this function has become a popular choice for evaluating optimization algorithms is attributable, in part, to the fact that its global minimum is located within a long, narrow valley that displays a parabolic shape (highly nonconvex). This characteristic makes the problem particularly challenging. 

Along with Figure \ref{fig:delta}, Figure \ref{fig:fbars} now shows the solver's finite time performance in terms of the progress made per iteration with mean and 95\% confidence interval after running the algorithms 20 times. In SimOpt and typical simulation experiments, a fixed total simulation budget is given; hence, the number of iterations completed varies from run to run and solver to solver. For our purposes here, we display the first 100 iterations for the two versions of ASTRO-DF. For both problems, ASTRO-DF with direct search demonstrates a noticeably faster rate of progress in the initial 30 iterations of the algorithm while revealing a nearly identical rate afterward. This better early progress can be explained by more efficient utilization of the limited budget, resulting from a higher likelihood of successful iterations.

\begin{figure} [htp]
\centering
\subfloat[20-dim noisy Rosenbrock]{%
\resizebox*{6.5cm}{!}{\includegraphics{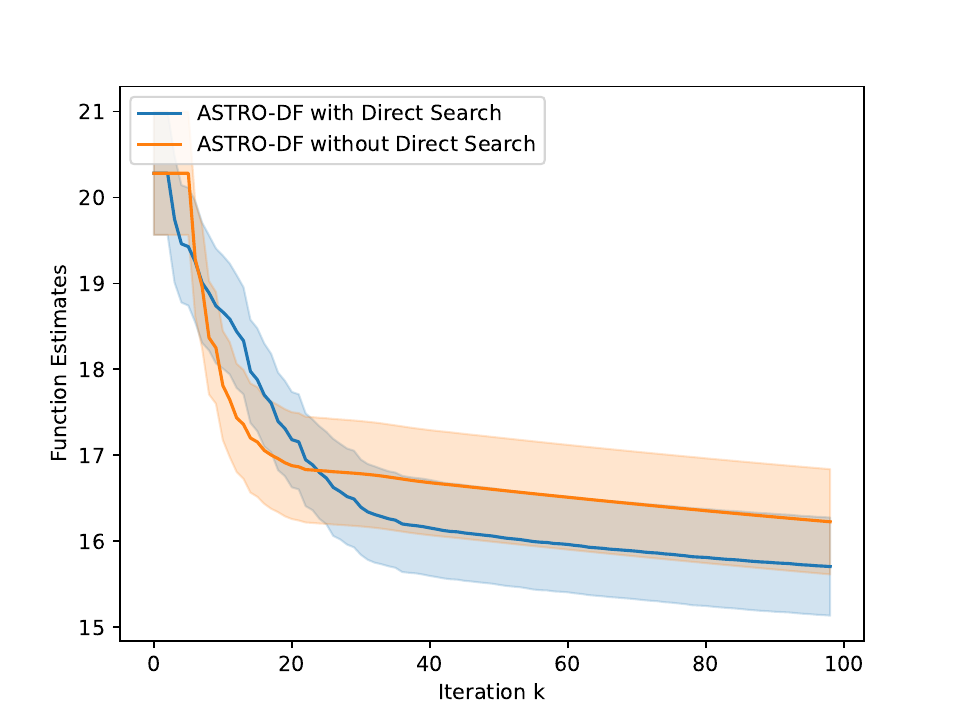}}\label{fig:fbar1}}\hspace{5pt}
\subfloat[13-dim Stochastic Activity Network]{%
\resizebox*{6.5cm}{!}{\includegraphics{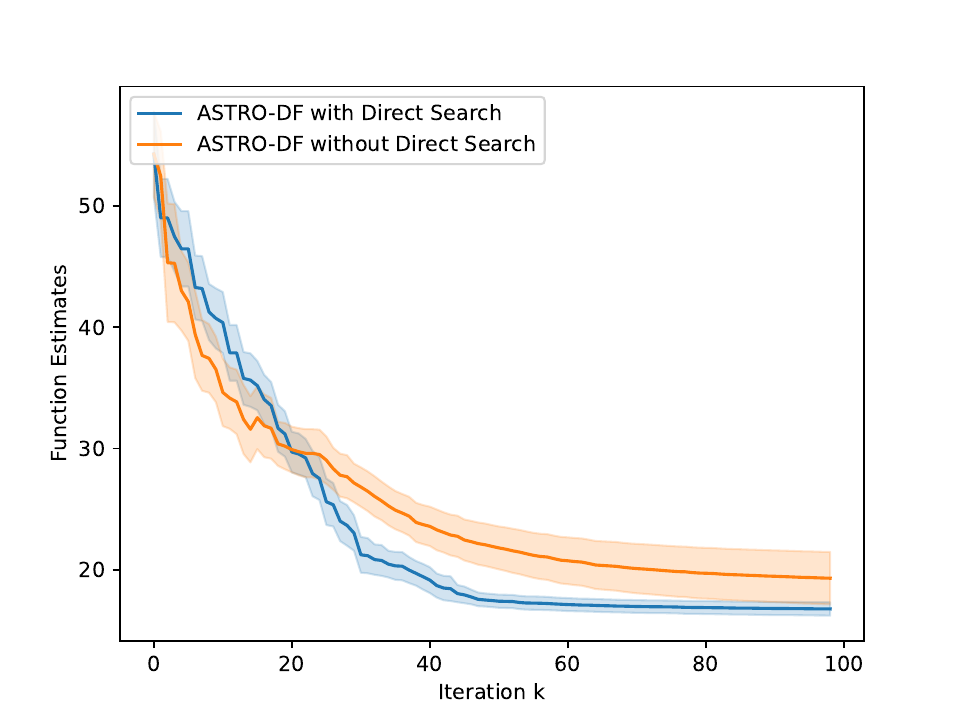}}\label{fig:fbar2}}
\caption{Better progress of ASTRO-DF in the early iterations with direct search is evident in the mean function estimates and 95\% confidence intervals on both problems.} \label{fig:fbars}
\end{figure}

To better map the improvements in gains per iteration to gains per simulation runs, we also report Figure~\ref{fig:totalwork_san} for the SAN problem. This figure reveals that ASTRO-DF without direct search can only have about 40 iterations with a 30,000 budget, whereas the ASTRO-DF with direct search reaches 100 iterations with the same budget. The evident cause of less progress in the original ASTRO-DF is the rapid reduction of $\Delta_k$ to find new incumbents with a satisfactory reduction, necessitating an additional budget for each iteration -- a faster increase in the expended budget after iteration 20.

\begin{figure} [htp]
\centering
\includegraphics[width=0.5\columnwidth]{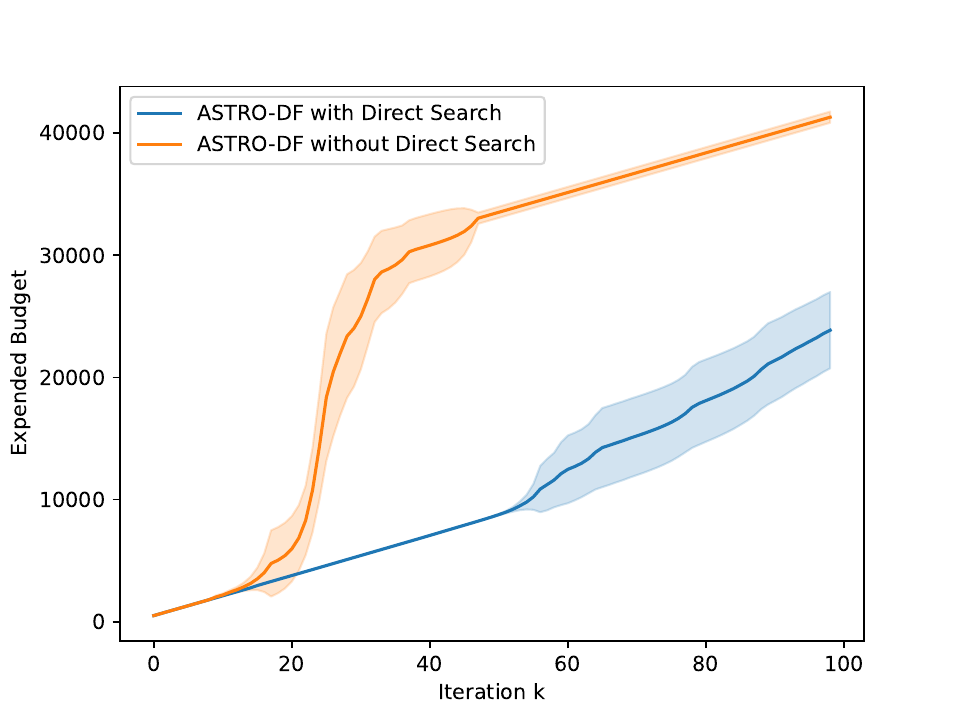}
\caption{The total budget spent per iteration is less due to savings from using direct search, as tested in SAN with 95\% confidence intervals from 20 macroreplications.} \label{fig:totalwork_san}
\end{figure}

Lastly, Figure~\ref{fig:objective} shows the mean progress with 95\% confidence for each problem as a function of the expended budget. In both cases, significantly better solutions are reached with high confidence due to the boost in the early iterations of the algorithm, as discussed above. Especially for the SAN problem, we observe that direct search remarkably accelerates the convergence in the first 30\% of the expended budget with less uncertainty (narrower confidence intervals). In the Rosenbrock function, the improvement, while significant, is not as pronounced. We attribute consistently better solutions with direct search in this non-convex noisy problem to a better exploration of the feasible region (more chances of success means visiting more points) without an additional cost, besides the saving of budget and a slower rate of $\Delta_k$ decay.

\begin{figure} [htp]
\centering
\subfloat[20-dim noisy Rosenbrock]{%
\resizebox*{6.5cm}{!}{\includegraphics{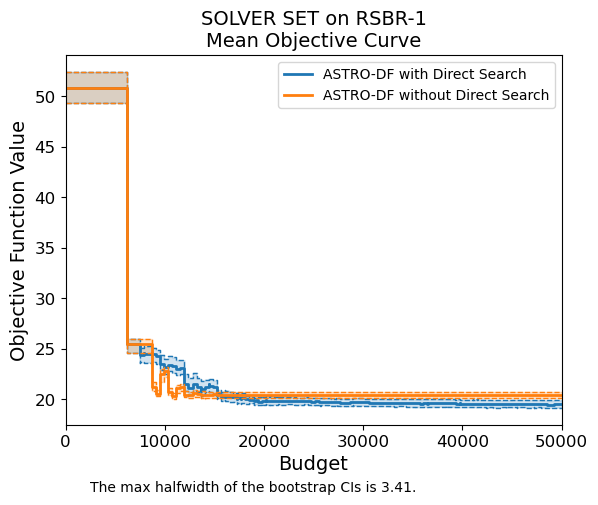}}\label{fig:objective_rosen}}\hspace{5pt}
\subfloat[13-dim Stochastic Activity Network]{%
\resizebox*{6.5cm}{!}{\includegraphics{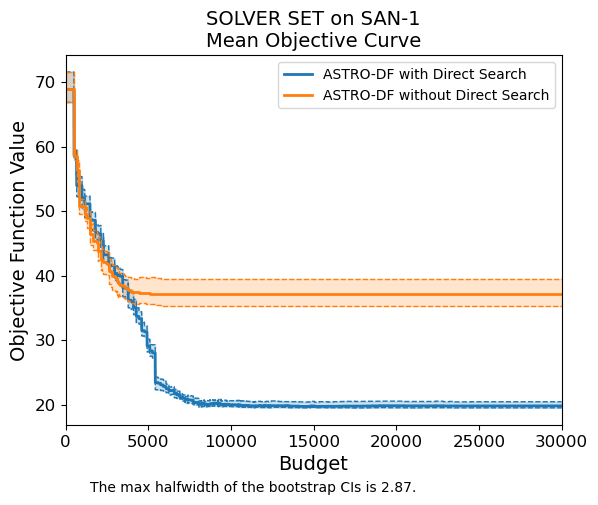}}\label{fig:objective_san}}
\caption{Mean value and 95\% confidence interval of objective function trajectory per spent budget for both problems exhibits significant finite-time improvement with direct search.} \label{fig:objective}
\end{figure}

\section{Conclusion and Future Work} \label{sec:conclusion}
This paper proposes efficient procedures and theoretical advancements for solving SDFO to first-order optimality in finite time. Trust regions are a class of algorithms with growing popularity for their volatile mechanics and stability in this context. However, expensive function approximations with locally certified models have caused a shortage of implementable trust-region algorithms for SDFO that serve higher dimensions robustly and efficiently. Our approach of integrating adaptive sampling strategies with local approximations via coordinate-basis designs and leveraging intermediate direct search steps addresses this limitation. The resulting algorithm is implemented and experimented on a testbed of SDFO problems and maintains almost sure convergence guarantees. Crucially, we also prove canonical complexity rates in almost sure sense and in expectation without requiring practically hard assumptions. The gain in finite-time performance is apparent with justifiable faster progress in the early iterations of the search.

A future research direction is on the exploration of the work complexity or total oracle runs and the dimension-dependent constants. Additionally, reusing history and allowing the design set geometry to vary may be beneficial. Although the refined ASTRO-DF effectively utilizes information by incorporating direct search, the data (visited points) from previous iterations is not carried over to subsequent iterations. Therefore, there might be potential to enhance efficiency by systematically reusing the design points and replications while retaining the benefits of ASTRO-DF with coordinate direct search. Other research areas include handling higher dimensions with techniques like random subspaces, in which one generates a sequence of solutions to random embedding constructions in lower dimensions without losing much information.\citep{cartis2022randomsubspace, dzahini2022randomsubsapce}. 

\if0\blind{
\section*{Acknowledgements}
The authors acknowledge the helpful discussions with Dr. Raghu Pasupathy that led to the improvement of the manuscript and the generous support from the National Science Foundation Grant CMMI-2226347.	} \fi

\bibliographystyle{apalike}
\spacingset{1}
\bibliography{main-paper}

\newpage
\appendix
\section{Upper Bound for the Model Error}
\label{app:kappa-ef}
We show that the upper bound for the model  error is $\mcO(\Delta_k^2 + E)$, where $E$ denotes the stochastic error. This finding implies that, from Theorem \ref{thm:bddE}, the fully-linear model can be attained almost surely when $k$ is sufficiently large, provided that the sampling rule \eqref{eq:inner-sampling-interpolation} is followed.

\begin{theorem}
    Let Assumption \ref{assum:fn} and \ref{assum:error} hold, and the interpolation model $M_k(\BFx)$ of $f$ be a stochastic quadratic models with diagonal Hessian constructed using $\Fbar(\BFX_k^{i},n(\BFX_k^{i}))=f(\BFX_k^{i}) + \Ebar_k^{i},$ for $i=0,1,\dots,2d$. Then, we can uniformly bound the model  error by, for any $\BFx\in\mcB\left(\BFX_k;\Delta_k\right)$,
    \begin{align*}
        | M_k(\BFx) - f(\BFx) | \le \kappa_{ef} \Delta_k^2 + |\Ebar_k^{0}| + \sum_{i=1}^{2d}\left|\Ebar_k^{i}-\Ebar_k^{0}\right|,
    \end{align*}
    where $\kappa_{ef} = \kappa_{eg1}+\frac{(\kappa_{Lg}+\kappa_{\sfH})}{2}$ and $\kappa_{eg1}$ = $\frac{5\sqrt{2d}}{2} (\kappa_{Lg} + \kappa_{\sfH})$. If in addition $n(\BFX_k^{i})=N_k^i$ following the adaptive sampling rule~\eqref{eq:inner-sampling-interpolation}, then $| M_k(\BFx) - f(\BFx) | \le (\kappa_{ef}+(4d+1)c_E)\Delta_k^2$ for a given constant $c_E>0$ and a sufficiently large $k$ with probability 1.
\end{theorem}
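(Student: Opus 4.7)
The plan is to split $M_k(\BFx) - f(\BFx)$ into three transparent pieces by exploiting the explicit form of the quadratic model. Writing $\BFs = \BFx - \BFX_k$ with $\|\BFs\|\le\Delta_k$, the model expands as $M_k(\BFx) = \beta_0 + \BFs^\intercal\BFG_k + \tfrac{1}{2}\BFs^\intercal \sfH_k\BFs$. Because the first equation of the coordinate-basis interpolation system is $\beta_0 = \Fbar(\BFX_k^{0},N_k^{0})$, the constant term satisfies $\beta_0 - f(\BFX_k) = \Ebar_k^{0}$, which is precisely what generates the free $|\Ebar_k^{0}|$ term in the target bound. Comparing against the second-order Taylor expansion $f(\BFx) - f(\BFX_k) - \BFs^\intercal\TD f(\BFX_k) = R(\BFs)$ with $|R(\BFs)|\le \tfrac{\kappa_{Lg}}{2}\|\BFs\|^2$ (valid because $\TD f$ is $\kappa_{Lg}$-Lipschitz by Assumption \ref{assum:fn}), the triangle inequality would give
\begin{equation*}
|M_k(\BFx) - f(\BFx)| \le |\Ebar_k^{0}| + \|\BFs\|\,\|\BFG_k - \TD f(\BFX_k)\| + \tfrac{1}{2}\|\BFs\|^2\|\sfH_k\| + \tfrac{\kappa_{Lg}}{2}\|\BFs\|^2.
\end{equation*}

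Next I would handle the gradient gap using Theorem \ref{thm:modelerror_stoch} evaluated at $\BFx = \BFX_k$, where $\TD M_k(\BFX_k) = \BFG_k$: this supplies $\|\BFG_k - \TD f(\BFX_k)\| \le \kappa_{eg1}\Delta_k + \Delta_k^{-1}\sqrt{\sum_{i=1}^{2d}(\Ebar_k^{i}-\Ebar_k^{0})^2}$. The Hessian norm is bounded by $\kappa_\sfH$ via Assumption \ref{assum:fcd}. Substituting, using $\|\BFs\|\le\Delta_k$, and applying the elementary inequality $\sqrt{\sum_i a_i^2}\le \sum_i |a_i|$ to the residuals $a_i = \Ebar_k^{i}-\Ebar_k^{0}$ would collapse everything into
\begin{equation*}
|M_k(\BFx) - f(\BFx)| \le \bigl(\kappa_{eg1} + \tfrac{\kappa_{Lg}+\kappa_\sfH}{2}\bigr)\Delta_k^2 + |\Ebar_k^{0}| + \sum_{i=1}^{2d}\bigl|\Ebar_k^{i}-\Ebar_k^{0}\bigr|,
\end{equation*}
which is exactly the first claim with $\kappa_{ef} = \kappa_{eg1} + (\kappa_{Lg}+\kappa_\sfH)/2$.

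For the second assertion, I would invoke Theorem \ref{thm:bddE} applied separately to each of the $2d+1$ design-point estimators, each of which is produced by the adaptive sampling rule \eqref{eq:inner-sampling-interpolation}: for any $c_E>0$, the event $\{|\Ebar_k^{i}|\le c_E\Delta_k^2\}$ holds for all sufficiently large $k$ almost surely, and the intersection of these events over the finite index set $i\in\{0,1,\dots,2d\}$ still has probability one. Combining $|\Ebar_k^{0}|\le c_E\Delta_k^2$ with $|\Ebar_k^{i}-\Ebar_k^{0}|\le 2c_E\Delta_k^2$ at each of the $2d$ off-center points bounds the stochastic portion of the first inequality by $(4d+1)c_E\Delta_k^2$, and adding the deterministic $\kappa_{ef}\Delta_k^2$ yields the stated bound $(\kappa_{ef}+(4d+1)c_E)\Delta_k^2$.

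I do not anticipate a conceptual obstacle; the only bookkeeping point to watch is that the gradient bound from Theorem \ref{thm:modelerror_stoch} scales with $\Delta_k$ rather than $\Delta_k^2$, so once multiplied by $\|\BFs\|\le\Delta_k$ it contributes $\kappa_{eg1}\Delta_k^2$, which is precisely what makes $\kappa_{ef}$ match the stated formula rather than being of smaller order. The other subtle point is that the absolute bound on $\|\sfH_k\|$ comes from Assumption \ref{assum:fcd}, so the Hessian contribution is $\tfrac{\kappa_\sfH}{2}\Delta_k^2$ (not an estimation-error-dependent quantity), which is what preserves the clean deterministic/stochastic split in the final bound.
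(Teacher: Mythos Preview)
Your proposal is correct and follows essentially the same route as the paper's proof: decompose $M_k(\BFx)-f(\BFx)$ via the explicit model expansion, use $\beta_0=f(\BFX_k)+\Ebar_k^{0}$, control the Taylor remainder by $\tfrac{\kappa_{Lg}}{2}\|\BFs\|^2$, bound $\|\BFG_k-\TD f(\BFX_k)\|$ by Theorem~\ref{thm:modelerror_stoch}, bound $\|\sfH_k\|\le\kappa_\sfH$, and finish with $\sqrt{\sum a_i^2}\le\sum|a_i|$ and Theorem~\ref{thm:bddE}. If anything, your presentation of the Taylor remainder as a two-sided bound $|R(\BFs)|\le\tfrac{\kappa_{Lg}}{2}\|\BFs\|^2$ is slightly cleaner than the paper's one-sided inequality followed by taking absolute values.
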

\begin{proof}
    Based on Taylor expansion, we know 
    \begin{align}
    \label{eq:functionTaylor}
        f(\BFx)-f(\BFX_k^{0}) \le (\BFx-\BFX_k^{0})^\intercal \TD f(\BFX_k^{0}) + \frac{\kappa_{Lg}}{2} \|\BFx - \BFX_k^{0}\|^2.
    \end{align}
    Then we obtain by subtracting \eqref{eq:functionTaylor} from \eqref{eq:mdefn}
    \begin{align*}
        (f(\BFx)-f(\BFX_k^{0})) - (M_k(\BFx)-\beta_0) \le (\BFx-&\BFX_k^{0})^\intercal (\TD f(\BFX_k^{0}) - \BFG) \\&
        + \frac{\kappa_{Lg}}{2} \Delta_k^2-\frac{1}{2}(\BFx-\BFX_k^{0})^\intercal \sfH (\BFx-\BFX_k^{0}).
    \end{align*}
    Since we know $\beta_0 = \Fbar(\BFX_k^{0}, n(\BFX_k^{0}))=f(\BFX_k^{0}) + \Ebar_k^{0}$ for a stochastic quadratic models with diagonal Hessian, we have 
    \begin{align*}
        |f(\BFx) - M_k(\BFx)| &\le  \|\BFx-\BFX_k^{0}\| \|\TD f(\BFX_k^{0}) - \BFG\| + \frac{\kappa_{Lg}}{2} \Delta_k^2 + \frac{1}{2}\|\sfH\| \Delta_k^2+|\Ebar_k^{0}|\\
        &\le \Delta_k \|\TD f(\BFX_k^{0}) - \BFG\| + \frac{\kappa_{Lg}+\kappa_{\sfH}}{2} \Delta_k^2 +|\Ebar_k^{0}|\\
        &\le \left(\kappa_{eg1}\Delta_k + \frac{\sqrt{\sum_{i=1}^{2d}\left(\Ebar_k^{i}-\Ebar_k^{0}\right)^2}}{\Delta_k}\right)\Delta_k + \frac{\kappa_{Lg}+\kappa_{\sfH}}{2} \Delta_k^2 +|\Ebar_k^{0}|\\
        &\leq \left(\kappa_{eg1}+\frac{(\kappa_{Lg}+\kappa_{\sfH})}{2}\right) \Delta_k^2 + |\Ebar_k^{0}| + \sum_{i=1}^{2d}\left|\Ebar_k^{i}-\Ebar_k^{0}\right|.
    \end{align*} 
    With adaptive sample sizes $N_k^i$ for $i=0,1,\dots,2d$, given a constant $c_E>0$ and sufficiently large $k$, the error terms can be bounded by $|\Ebar_k^{0}|\leq c_E\Delta_k^2$ and $\left|\Ebar_k^{i}-\Ebar_k^{0}\right|\leq2c_E\Delta_k^2$ with probability 1. This in turn completes the proof.
\end{proof}

\section{Upper Bound for $\|\sfH_k\|$}
\label{app:kappa-hessian}

We show that the model Hessian can be bounded by $\mcO(\|\TD f(\BFX_k)\|\Delta_k^{-1} + E\Delta_k^{-2})$, where $E$ represents the stochastic error. Theorem \ref{thm:delta_epsilon} indicates that $\Delta_k > \gamma_2 c_{lb}\epsilon$ for any $k<T_\epsilon$, provided that $c_{lb}$ is sufficiently small. So given $\epsilon >0$, $\|\TD f(\BFX_k)\|\Delta_k^{-1}$ can also be bounded by a fixed constant. Additionally, we found from Theorem \ref{thm:bddE} that $E\Delta_k^{-2}$ is also bounded by another fixed constant. Therefore, there exists some $\kappa_{\sfH} > 0$ such that $\|\sfH_k\| \le \kappa_{\sfH}$ for all $k$ with probability 1. It is worth noting that we still need Assumption \ref{assum:fcd} because it is needed to prove Theorem \ref{thm:delta_epsilon}.

\begin{theorem}
    Let Assumption \ref{assum:fn} and \ref{assum:error} hold, and the interpolation model $M_k(\BFx)$ of $f$ be a stochastic quadratic models with diagonal Hessian constructed using $\Fbar(\BFX_k^{i},n(\BFX_k^{i}))=f(\BFX_k^{i}) + \Ebar_k^{i},$ for $i=0,1,\dots,2d$. Then, we can bound $\|\sfH_k\|$ by
    \begin{align*}
        \|\sfH_k\| \le \max_{i\in\{1,2,\dots,d\}} \left(\kappa_{Lg} + \frac{2\|\TD f(\BFX_k)\|}{\Delta_k} + \frac{|\Ebar_k^{i}|+|\Ebar_k^{i+d}|+|2\Ebar_k^{0}|}{\Delta_k^2}\right).
    \end{align*} If in addition $n(\BFX_k^{i})=N_k^i$ following the adaptive sampling rule~\eqref{eq:inner-sampling-interpolation}, then $\|\sfH_k\|$ is bounded by a constant $\kappa_\sfH$ for large enough $k$ almost surely.
\end{theorem}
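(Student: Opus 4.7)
The plan is to exploit the diagonal structure of $\sfH_k$ so that $\|\sfH_k\|=\max_{i\in\{1,\dots,d\}}|H_i|$, and then bound each diagonal entry in closed form. Evaluating $M_k(\BFX_k+\BFs)=\beta_0+\BFs^\intercal\BFG_k+\tfrac12\BFs^\intercal\sfH_k\BFs$ at the three collinear design points $\BFX_k$, $\BFX_k+\BFe^i\Delta_k$, $\BFX_k-\BFe^i\Delta_k$, using $\beta_0=\Fbar(\BFX_k,N_k^0)=f(\BFX_k)+\Ebar_k^0$, and adding the two off-center interpolation equations while subtracting twice the center one (which cancels the $G_i$ contribution) yields the closed form
\begin{equation*}
H_i=\frac{f(\BFX_k+\BFe^i\Delta_k)+f(\BFX_k-\BFe^i\Delta_k)-2f(\BFX_k)}{\Delta_k^2}+\frac{\Ebar_k^i+\Ebar_k^{i+d}-2\Ebar_k^0}{\Delta_k^2}.
\end{equation*}
This separates a deterministic second-difference piece from a pure noise piece.

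For the first inequality of the theorem, I would then pass absolute values through the noise term and bound the deterministic piece via the standard consequence of $\TD f$ being Lipschitz with constant $\kappa_{Lg}$, namely $|f(\BFX_k\pm\BFe^i\Delta_k)-f(\BFX_k)|\le\|\TD f(\BFX_k)\|\Delta_k+\tfrac12\kappa_{Lg}\Delta_k^2$, applied to each of the two off-center evaluations. Dividing by $\Delta_k^2$ and maximizing over $i$ produces the claimed per-path bound. I deliberately use this looser one-sided estimate rather than the tighter central-difference cancellation (which would give only $\kappa_{Lg}$), because the extra $2\|\TD f(\BFX_k)\|/\Delta_k$ term is exactly the quantity that later gets absorbed into a deterministic constant through the criticality-like mechanism built into the algorithm.

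For the second claim, the main obstacle is precisely this ratio $\|\TD f(\BFX_k)\|/\Delta_k$: both numerator and denominator vanish along the iterations, so no monotone path-wise bound is available. Three ingredients combine to control everything. First, Theorem \ref{thm:bddE} applied to each of the $2d+1$ design points shows that for any preassigned $c>0$ one has $|\Ebar_k^j|\le c\Delta_k^2$ for all $j$ and all sufficiently large $k$, almost surely, so the noise piece is eventually dominated by $4c$. Second, Lemma \ref{thm:delta_epsilon} guarantees $\Delta_k\ge\gamma_2 c_{lb}\epsilon$ for every $k<T_\epsilon$, which is the only range of $k$ relevant to the subsequent complexity analysis. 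Third, almost sure convergence (Theorem \ref{thm:almostsureconvergence}) together with Lipschitz continuity of $\TD f$ yields a deterministic almost sure upper bound $M$ on $\|\TD f(\BFX_k)\|$ for large $k$; combined with the $\Delta_k$ lower bound, this controls the middle term by $2M/(\gamma_2 c_{lb}\epsilon)$. Summing the three deterministic upper bounds with $\kappa_{Lg}$ defines the constant $\kappa_\sfH$. A subtle point worth flagging is that Lemma \ref{thm:delta_epsilon} itself invokes Assumption \ref{assum:fcd} on $\|\sfH_k\|$, so the present theorem should be read as verifying self-consistency of that assumption under the refined design, provided $\mu$ is chosen so that $c_E$ in \eqref{eq:c-E} is positive.
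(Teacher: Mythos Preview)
Your proposal is correct and follows essentially the same approach as the paper: you derive the identical closed form for the diagonal entries $H_i$ via the three-point second difference, apply the same Lipschitz-gradient bound $|f(\BFX_k\pm\BFe^i\Delta_k)-f(\BFX_k)|\le\|\TD f(\BFX_k)\|\Delta_k+\tfrac12\kappa_{Lg}\Delta_k^2$ to each off-center term, and take the max over $i$. For the second claim your argument is in fact more explicit than the paper's terse proof---the paper likewise invokes Theorem~\ref{thm:bddE} for the noise piece and the $\Delta_k$ lower bound from Lemma~\ref{thm:delta_epsilon} to control $\|\TD f(\BFX_k)\|/\Delta_k$, and it explicitly acknowledges (in the text preceding the theorem) the same circularity with Assumption~\ref{assum:fcd} that you flag.
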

\begin{proof}
    Let $[\sfH_k]_i$ be the $i$-th diagonal element of $\sfH_k$. We already know that
    \begin{equation*}
        [\sfH_k]_i = \frac{f(\BFX_k+\BFe_i\Delta_k) + f(\BFX_k-\BFe_i\Delta_k)-2f(\BFX_k)+\Ebar_k^{i}+\Ebar_k^{i+d}-2\Ebar_k^{0}}{\Delta_k^2}.
    \end{equation*}
    In addition, we know from the lipschitz continuity of the gradient,
    \begin{align*}
        |f(\BFX_k+\BFe_i\Delta_k)-f(\BFX_k)| &\le |\TD f(\BFX_k)^\intercal(\BFX_k+\BFe_i\Delta_k-\BFX_k)| + \frac{\kappa_{Lg}}{2}\|\BFX_k+\BFe_i\Delta_k-\BFX_k\|^2 \\
        &\le \Delta_k \|\TD f(\BFX_k)\| + \frac{\kappa_{Lg}}{2}\Delta_k^2.
    \end{align*}
    Then, we have 
    \begin{align*}
        |[\sfH_k]_i| &\le \frac{ 2 \Delta_k \|\TD f(\BFX_k)\| + \kappa_{Lg}\Delta_k^2 + |\Ebar_k^{i}|+|\Ebar_k^{i+d}|+|2\Ebar_k^{0}|}{\Delta_k^2}\\
        &= \kappa_{Lg} + \frac{2\|\TD f(\BFX_k)\|}{\Delta_k} + \frac{|\Ebar_k^{i}|+|\Ebar_k^{i+d}|+|2\Ebar_k^{0}|}{\Delta_k^2}.        
    \end{align*}
    As a result, we have from $\|\sfH_k\|\le \max_{i\in\{1,2,\dots,d\}} |[\sfH_k]_i|$
    \begin{equation*}
        \|\sfH_k\| \le \max_{i\in\{1,2,\dots,d\}} \left(\kappa_{Lg} + \frac{2\|\TD f(\BFX_k)\|}{\Delta_k} + \frac{|\Ebar_k^{i}|+|\Ebar_k^{i+d}|+|2\Ebar_k^{0}|}{\Delta_k^2}\right).
    \end{equation*}
    Note, using the adaptive sample sizes $N_k^i$ for all $i=0,1,\ldots, 2d$ renders $|\Ebar_k^{i}|$, $|\Ebar_k^{i+d}|$, and $|\Ebar_k^{0}|$ all of the same order as $\Delta_k^2$ for large enough $k$, as $\|\TD f(\BFX_k)\|$ and $\Delta_k$ become of the same order too for large $k$. This yields a constant bound for large $k$ almost surely that due to the max operator appears of the magnitude $\mcO(d\log d)$.
\end{proof}
 \end{document}